\newtheorem{thm}{Theorem}[section]
\newtheorem{defn}[thm]{Definition}
\newtheorem{cor}[thm]{Corollary}
\newtheorem{prop}[thm]{Proposition}
\theoremstyle{definition}
\newtheorem{rem}[thm]{Remark}
\newcommand{\D}{\mathcal{D}}
\newcommand{\N}{\mathds{N}}
\newcommand{\R}{\mathds{R}}
\newcommand{\K}{\mathds{K}}
\newcommand{\dist}{\textup{dist}}
\newcommand{\spann}{\textup{span}}
\newcommand{\vertiii}[1]{{\left\vert\kern-0.25ex\left\vert\kern-0.25ex\left\vert #1 
    \right\vert\kern-0.25ex\right\vert\kern-0.25ex\right\vert}}
\date{\today}
\title[An Epsilon-Hypercyclicity Criterion and its application on classical Banach spaces]{An Epsilon-Hypercyclicity Criterion and its application on classical Banach spaces}
\author[S. Tapia-Garc\'ia]{Sebastián Tapia-García}
\address{Sebastián Tapia-García}
\address{ Departamento de Ingenier\'ia Matem\'atica, CMM (CNRS UMI 2807) Universidad de Chile\\ Beauchef 851, Santiago, Chile.}
\address{Institute de Math\'ematique de Bordeaux, IMB (CNRS UMR 5251) Universit\'e de Bordeaux, Course de la liberation 351, Talence, France.}
\email{stapia@dim.uchile.cl}
\begin{document}

\begin{abstract}
	We provide a criterion for $\varepsilon$-hypercyclicity. Also, complementing the ideas of Badea, Grivaux, Müller and Bayart, we construct $\varepsilon$-hypercyclic operators which are not hypercyclic in a wider class of separable Banach spaces, including several classical Banach spaces. For instance, our results can be applied to separable infinite dimensional $L^p$ spaces and $C(K)$ spaces. 
\end{abstract}

\maketitle
\noindent \textbf{MSC2020:} Primary: 47A16, 47B37.\\
\noindent \textbf{Key words and phrases:} Orbits of a linear operator, $\varepsilon$-hypercyclic operator, $\varepsilon$-hypercyclicity criterion. 
\section{Introduction}
Let $X$ be a separable infinite dimensional real or complex Banach space and $T$ be a linear bounded operator on $X$. 
For each $x\in X$, the orbit of $x$ under the action of $T$ is the set $\textup{Orb}_T(x):=\{T^nx:n\in\N\}$. 
During the last decades, the study of orbits of bounded operators has been a prolific area in Functional Analysis. 
Indeed, this is related with the following invariant subspace/subset problem: 
Does there exist a linear bounded operator $T$ without non-trivial invariant closed subspace/subset?
One of the most important properties in terms of the dynamic generated by an operator is the so-called hypercyclicity, which appeared for first time in \cite{Bi} for Fr\'echet spaces and in \cite{Ro} for Banach spaces. 
According to \cite{BM}, an operator $T$ is called hypercyclic if there exists a point $x\in X$ such that $\textup{Orb}_T(x)$ is dense in $X$. 
The vector $x$ is said to be a hypercyclic vector of $T$.
It easily follows that an operator $T$ has no non-trivial invariant closed subset if and only if each nonzero vector is hypercyclic.
The invariant subset (or subspace) problem has been solved in some particular cases, but it remains open in reflexives spaces.
In particular, this is an open problem in the separable infinite dimensional Hilbert space, see for instance \cite{G,R}.
The effort of researchers to understand linear bounded operators in terms of their dynamic has increased along the last 40 years.
Nowadays, there are several different properties which are somehow related to hypercyclicity.
Further information can be found in \cite{BM} and references therein.\\

Let us write down the principal object of our research, which was introduced for first time in \cite{BGM}.
\begin{defn}
	Let $X$ be a Banach space and let $\varepsilon>0$. 
	A linear bounded operator $T$ on $X$ is called $\varepsilon$-hypercyclic if there exists a vector $x$ such that for all $y\in X\setminus\{0\}$, there exists $n\in \N$ for which
	\[\|T^nx-y\| \leq \varepsilon\|y\|.\]
	The vector $x$ is said to be an $\varepsilon$-hypercyclic vector of $T$.
\end{defn}
By the very definition, it is clear that every hypercyclic operator is $\varepsilon$-hypercyclic for every $\varepsilon>0$. 
So, we already know that $\varepsilon$-hypercyclic operators exist in every separable infinite dimensional Banach space, see \cite{A} and \cite{Be}. 
Also, every linear operator is $1$-hypercyclic, since $0$ is a $1$-hypercyclic vector.
However, up to the best of our knowledge, it remains open  the question if every separable infinite dimensional Banach space admits an $\varepsilon$-hypercyclic which is not hypercyclic (with $\varepsilon\in (0,1)$).
Up to the best of our knowledge, in the literature we can find the construction of such an operator in the spaces $\ell^1(\N)$ and $\ell^2(\N)$, see \cite{BGM} and \cite{B} respectively.
We point out that the technique used by Bayart in \cite{B} is motivated by the one used by Badea, Grivaux and Müller in \cite{BGM}. 
Also, we can find other mentions of $\varepsilon$-hypercyclicity in \cite{BC,BBF,P}.\\

In this work, we introduce the following $\varepsilon$-Hypercyclicity criterion (Theorem \ref{criterion}) and we use it to provide several examples of $\varepsilon$-hypercyclic operators which are not hypercyclic (Theorem \ref{product space corollary}, Corollary \ref{Lp spaces}, Corollary \ref{C(K) space}).

\begin{thm}[$\varepsilon$-Hypercyclicity Criterion]\label{criterion}
	Let $X$ be a separable Banach space, let $T$ be a bounded operator on $X$ and let $\varepsilon\in(0,1)$. Let $\D_1$ be a dense set on $X$. Let $\D_2$ be a countable subset of $X$. Let us fix an enumeration of $\D_2:=\{y_k:k\in\N\}$. Assume further that 
	for each $x\in X\setminus\{0\}$, there are infinitely many integers $k\in\N$ such that $y_k\in B(x,\varepsilon\|x\|)$. 
	Let $(n(k))_k\subset \N$ be an increasing sequence and let $S_{n(k)}:\D_2\to X$ be a sequence of maps such that:
	\begin{enumerate}
		\item $\lim_{k\to\infty}\|T^{n(k)}x\|= 0$ for all $x\in \D_1$,
		\item $ \lim_{k\to\infty}\|S_{n(k)}y_k\|=0$,
		\item $ \lim_{k\to\infty}\|T^{n(k)}S_{n(k)}y_k-y_k\| = 0$.
	\end{enumerate}
	Then, $T$ is $\delta$-hypercyclic for all $\delta >\varepsilon$.
\end{thm}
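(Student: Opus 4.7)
The plan is to prove the criterion via the Baire category theorem, producing a dense $G_\delta$ set of $\delta$-hypercyclic vectors. Fix $\delta > \varepsilon$ and set $\mu := (\delta - \varepsilon)/(1+\varepsilon) > 0$. The strategy mirrors that of the classical hypercyclicity criterion, but with a quantitative twist: rather than approximating every $y \in X$ arbitrarily well, one only approximates each $y_k$ within relative error $\mu$, and the density hypothesis on $\D_2$ propagates this to a $\delta$-approximation of every $y \in X \setminus \{0\}$.

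First I would carry out this reduction. Fix $y \in X \setminus \{0\}$. By hypothesis there exists $k$ with $y_k \in B(y, \varepsilon\|y\|)$; because $\varepsilon < 1$, this ball does not contain $0$, so $y_k \neq 0$ and $\|y_k\| \leq (1+\varepsilon)\|y\|$. If $x$ satisfies $\|T^n x - y_k\| < \mu\|y_k\|$ for some $n$, then
\[
\|T^n x - y\| \leq \|T^n x - y_k\| + \|y_k - y\| < \mu(1+\varepsilon)\|y\| + \varepsilon\|y\| = \delta\|y\|.
\]
Hence it suffices to find a single $x \in X$ such that for every $k$ with $y_k \neq 0$ there exists $n \in \N$ with $\|T^n x - y_k\| < \mu\|y_k\|$.

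To produce such an $x$, I would apply Baire's theorem in the complete space $X$ to the open sets
\[
V_k := \bigcup_{n \in \N} \{x \in X : \|T^n x - y_k\| < \mu\|y_k\|\}, \qquad k \text{ with } y_k \neq 0.
\]
It remains to verify that each $V_k$ is dense. Given $x_0 \in X$ and $r > 0$, choose $x' \in \D_1$ with $\|x' - x_0\| < r/2$, and consider the perturbed vectors $x_j := x' + S_{n(j)} y_k$. Hypothesis $(2)$ forces $\|x_j - x'\| \to 0$, so $x_j \in B(x_0, r)$ for large $j$, while
\[
\|T^{n(j)} x_j - y_k\| \leq \|T^{n(j)} x'\| + \|T^{n(j)} S_{n(j)} y_k - y_k\| \xrightarrow[j\to\infty]{} 0
\]
by $(1)$ (since $x' \in \D_1$) and $(3)$. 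For $j$ large enough both bounds hold simultaneously and $x_j \in V_k \cap B(x_0, r)$, establishing density. Any $x$ in the nonempty intersection $\bigcap_{k:\, y_k\neq 0} V_k$ then satisfies the property isolated in the reduction, and hence is $\delta$-hypercyclic.

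The main obstacle, really the only delicate point, is the initial choice of the tolerance $\mu$: one must balance the relative error $\mu$ in approximating $y_k$ against the fixed relative error $\varepsilon$ incurred by replacing $y$ with $y_k$, so that the combined error falls below $\delta\|y\|$ uniformly in $y$. Once this bookkeeping is in place, the rest is a routine Baire-category argument entirely parallel to the classical hypercyclicity criterion.
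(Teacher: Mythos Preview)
Your argument misreads hypotheses $(2)$ and $(3)$. They assert only the \emph{diagonal} limits
\[
\lim_{k\to\infty}\|S_{n(k)}y_k\|=0
\quad\text{and}\quad
\lim_{k\to\infty}\|T^{n(k)}S_{n(k)}y_k-y_k\|=0,
\]
not that $S_{n(j)}y\to 0$ and $T^{n(j)}S_{n(j)}y\to y$ for every fixed $y\in\D_2$. In your density step you fix $k$ and let $j\to\infty$ in $S_{n(j)}y_k$ and $T^{n(j)}S_{n(j)}y_k$; the hypotheses give you no information whatsoever about these quantities. So the claim that each $V_k$ is dense is unsupported, and the Baire argument collapses. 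This distinction is not a technicality: the paper shows separately (its Proposition~5.1) that if one strengthens $(2)$ and $(3)$ to hold for every $y\in\D_2$, then $T$ already satisfies the full Hypercyclicity Criterion, so the operators the criterion is designed to capture would be excluded.

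A Baire-type argument can be salvaged, but it must be reorganized around the diagonal structure. Fix a countable dense set $\{z_j\}\subset X\setminus\{0\}$ and, for each $j$ and each $m\in\N$, set
\[
W_{j,m}=\bigcup_{n}\{x:\|T^{n}x-z_j\|<\varepsilon\|z_j\|+1/m\}.
\]
To see that $W_{j,m}$ is dense, use the assumption that infinitely many indices $k$ satisfy $y_k\in B(z_j,\varepsilon\|z_j\|)$: along this infinite subset of $k$'s all three hypotheses act simultaneously, and the candidate $x'+S_{n(k)}y_k$ (with $x'\in\D_1$) works for $k$ large. Any $x\in\bigcap_{j,m}W_{j,m}$ then satisfies $\inf_n\|T^nx-z_j\|\le\varepsilon\|z_j\|$ for every $j$, and a triangle-inequality argument like your reduction yields $\delta$-hypercyclicity. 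The paper instead builds a single vector constructively as a convergent series, but the essential idea is the same: one must let the index of $y_k$ run, not freeze it.
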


Provided with the preceding criterion, we construct $\varepsilon$-hypercyclic operators which are not hypercyclic in more general separable infinite dimensional Banach spaces. 
For instance, separable infinite dimensional $L^p$ spaces, with $p\in[1,\infty)$ and $C(K)$ spaces are enclosed by our result. Concretely, a consequence of our results reads as follows

\begin{thm}\label{product space corollary}
	Let $X$ be a separable Banach space. Assume that $X$ contains a complemented subspace isomorphic to $c_0(\N)$ or $\ell^p(\N)$, for $p\in[1,\infty)$. Then $X$ admits an $\varepsilon$-hypercyclic operator which is not hypercyclic.
\end{thm}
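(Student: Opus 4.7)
The strategy is to apply Theorem~\ref{criterion} directly to an operator on $X$ built from the complemented decomposition $X = Y \oplus Z$, where $Y$ is isomorphic to $c_0(\N)$ or $\ell^p(\N)$. The canonical basis $(e_k)_{k\in\N}$ of $Y$, transported to $X$ via the isomorphism, will act as a ``highway'' along which the orbit travels, while perturbation terms will launch iterates into the rest of $X$ in order to reach the non-$Y$ directions.

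I would first fix a countable dense sequence $(w_k)_{k\in\N}$ in $X$ and take this as $\D_2$; density is more than sufficient to ensure that every ball $B(x,\varepsilon\|x\|)$ with $x\neq 0$ contains infinitely many $w_k$, which is exactly the hypothesis on $\D_2$ in the criterion. For $\D_1$ I would take $\spann\{e_k : k\in\N\} + Z_0$, where $Z_0$ is any countable dense subset of $Z$; this is dense in $X$ by density of $\spann\{e_k\}$ in $Y$.

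Next, following the schemes of \cite{BGM} and \cite{B}, I would pick a sparse sequence of special indices $j(1)<j(2)<\dotsb$, a rapidly increasing sequence $(n_k)_k$, and weights $(a_k)_k$, $(b_k)_k$, and define $T$ by setting $Te_k = a_k e_{k-1}$ whenever $k$ is not a special index, $Te_{j(k)} = b_k w_k$ at the special indices, and $T$ acting as zero (or a strict contraction) on $Z$. The unconditional basis property of $c_0$ and $\ell^p$ lets one bound $\|T\|$ in terms of $\sup_k |a_k|$ together with the $b_k$'s. Setting $S_{n_k}(w_k):=\lambda_k^{-1}e_{j(k)}$ for an appropriate scalar $\lambda_k$, conditions (2) and (3) of the criterion reduce to $\|e_{j(k)}\|/|\lambda_k|\to 0$ and a telescoping estimate showing that $T^{n_k}e_{j(k)}$ is nearly a specific multiple of $w_k$. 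Condition (1) follows from sufficiently rapid decay of the weights $a_k$, which forces $T^n u\to 0$ for every $u\in\D_1$ after finitely many shifts.

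The main obstacle will be choosing the parameters so that $T$ is simultaneously bounded, $\varepsilon$-hypercyclic through the criterion, and \emph{not} hypercyclic. For the non-hypercyclicity part I would adapt the Badea--Grivaux--Müller argument: by making the indices $j(k)$ sparse enough and the scalings $b_k$ small enough, every iterate $T^n x$ of a nonzero vector $x$ is forced to be either very small in norm or nearly aligned with one of the discrete directions $w_k$, so that the orbit closure omits a specific open subset of $X$. Balancing the competing scales — fast decay of $a_k$ so that condition~(1) holds, controlled growth of $b_k$ so that $T$ is bounded and condition~(3) holds, and sparsity of $(j(k))$ so that non-hypercyclicity persists — is where the technical heart of the proof lies, and the boundedness of the projections $P:X\to Y$ and $Q:X\to Z$ is what lets these estimates be carried out uniformly in $X$ rather than only in $Y$.
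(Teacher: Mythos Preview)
Your proposal has a fatal conceptual gap at the very first step: you take $\D_2$ to be a countable \emph{dense} sequence $(w_k)_k$ in $X$. If $\D_2$ is dense, then for \emph{every} $\varepsilon>0$ and every $x\neq 0$ the ball $B(x,\varepsilon\|x\|)$ contains infinitely many $w_k$; so once you verify conditions (1)--(3) of Theorem~\ref{criterion} for your enumeration, the criterion applies with \emph{every} $\varepsilon\in(0,1)$, and you conclude that $T$ is $\delta$-hypercyclic for all $\delta>0$. By the Badea--Grivaux--M\"uller result quoted just after Proposition~\ref{hyper implies ehyper}, this forces $T$ to be hypercyclic --- exactly what you are trying to avoid. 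The whole point of the criterion is that $\D_2$ must be a genuine $\varepsilon$-net (in the multiplicative sense) and \emph{not} dense; in the paper's construction the set $\D_2=\{z^k\}$ is obtained by perturbing a dense sequence $(x^k)$ by vectors of size $\|z^k-x^k\|\le 2\alpha^{-d}b\,\|x^k\|$, and it is precisely this perturbation that creates room for non-hypercyclicity. Your sketch of the non-hypercyclicity argument confirms the problem: you say the orbit is ``nearly aligned with one of the discrete directions $w_k$'', but since $(w_k)$ is dense in $X$ this excludes nothing.

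There is also a structural difference worth noting. The paper does \emph{not} attempt to build a single operator on $X$ mixing a shift on $Y$ with a contraction on $Z$ and occasional launches into $Z$. Instead it first builds an $\varepsilon$-hypercyclic, non-hypercyclic operator $S$ on the $c_0$/$\ell^p$ part alone (viewing $\ell^p\cong\bigoplus_{\ell^p}\ell^p$ and using an operator-weighted backward shift, Theorem~\ref{ehypercyclic2}), then takes any hypercyclic $T$ on the complement $W$ satisfying the Hypercyclicity Criterion, and finally uses Proposition~\ref{product space} to show $S\oplus T$ is $\varepsilon$-hypercyclic on $X$; non-hypercyclicity follows because the restriction of $S\oplus T$ to the invariant complemented subspace $V$ is $S$. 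Your scheme of letting $T$ act as zero or a strict contraction on $Z$ would never reach vectors with large $Z$-component in a controlled way once the dense-$\D_2$ idea is abandoned; the direct-sum route is what makes the extension from $c_0/\ell^p$ to general $X$ work.
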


The paper is organized as follows. 
In the next section, we provide all necessary definitions to carry out our results.
In the third section we recall the Hypercyclicity Criterion and we prove our $\varepsilon$-Hypercyclicity Criterion. 
Also, we present a sufficient condition to ensure that the direct sum of a hypercyclic operator and an $\varepsilon$-hypercyclic operator remains $\varepsilon$-hypercyclic in the product space.
The fourth section is devoted to prove an abstract version of Theorem \ref{product space corollary}.
In the fifth section, we discuss why a natural choice for an $\varepsilon$-Hypercyclicity Criterion implies that the operator satisfies the Hypercyclicity Criterion.
Finally, we end this manuscript with some proofs of simple but useful facts used previously through this work.

\section{Notation and Preliminaries}
Let us start fixing the notation used in this work. We denote by $X$ and $Y$ infinite dimensional real or complex separable Banach spaces. 
By $X^*$ we denote the dual space of $X$.
For $x\in X$ and $r>0$, $B(x,r)$ denotes for the closed ball centered at $x$ with radius $r$.
By operator on $X$ we mean a linear bounded map from $X$ to $X$.
A sequence $(e_n,e^*_n)_n\subset X\times X^*$ is called a biorthogonal system if $e_n^*(e_m)=\delta_{n,m}$, where $\delta_{n,m}$ stands for the Kronecker's symbol. 
\begin{defn} Let $X$ be a Banach space. We say that a biorthogal system $(e_n,e^*_n)_n\subset X\times X^*$ is a bounded M-basis on $X$ if $X=\overline{\spann}(e_n:n)$, $X^*=\overline{\spann}^{\omega^*}(e^*_n:n)$ and $\sup_{n\in\N} \|e_n\|\|e_n^*\|<\infty$.
\end{defn}
A classical result of Ovsepian and Pe\l czy\'nki asserts that each separable Banach space admits a bounded M-basis, see \cite{OP}.
\begin{defn} Let $X$ be a Banach space. We say that $(e_n)_n\subset X$ is a basis of $X$ if, for each vector $x$, there exists a unique sequence $(x_n)_n\subset \K$ such that $x=\sum_{n=1}^{\infty} x_ne_n$. Its associated biorthogonal system, denoted by $(e_n^*)_n$, is the sequence of linear bounded forms on $X$ defined by $e_n^*(x)=x_n$. 
\end{defn}

\begin{defn}
	Let $(e_n)_n$ be a basis on $X$. We say that $(e_n)_n$ is $C$-unconditional, for $C\geq 1$, if for any sequences of scalars $(a_n)_n,(b_n)_n\subset \K$ such that $|b_n|\leq |a_n|$ for all $n\in\N$, the following holds:
	\[\left\| \sum_{n=0}^{m-1}b_ne_{n_k}\right\| \leq C \left\| \sum_{n=0}^{m-1}a_ne_n\right\|,~ \forall m\in \N.\]
	
	We say that $(e_n)_n$ is unconditional if it is $C$-unconditional for some $C\geq 1$.
\end{defn}

The following space is constructed as a generalization of the spaces $c_0(X)$ or $\ell^p(X)$, for $p\in[1,\infty)$.

\begin{defn}\label{bigplusYX}
	Let $X$ and $Y$ be two Banach spaces. Suppose that $(f_n)_n\subset Y$ is a normalized $1$-unconditional basis for $Y$. We denote by $\bigoplus_Y X$ the vector space defined by
	\[\bigoplus_Y X:=\{(x_n)_n\in X^\N:~  \sum_{n=0}^\infty \|x_n\|_X f_n\in Y\}.\]
	We endow this space with the norm $\|\cdot\|$ defined by $\|(x_n)_n\|=\| \sum_{n=0}^\infty \|x_n\|_X f_n\|_Y$.
\end{defn}
Observe that, since the basis $(f_n)_n$ is $1$-unconditional, the proposed norm for $\bigoplus_Y X$ trivially satisfies triangle inequality.
A standard argument shows that $\bigoplus_Y X$ is a Banach space.
Clearly, the space constructed in Definition \ref{bigplusYX} depends on the chosen basis $(f_n)_n$ of $Y$, but we omit it for sake of brevity. 
Observe that, if $X$ is either $c_0(\N)$ or $\ell^p(\N)$, for $p\in [1,\infty)$, then $X$ is isometric to $\bigoplus_XX$, whenever we use the canonical basis of $X$.
Also, notice that for all $(x_n)_n\in\bigoplus_Y X$, the sequence $(\|x_n\|_X)_n$ tends to $0$.

\section{Epsilon-hypercyclicity criterion}

In the literature we can find sufficient conditions to prove that a given operator has a vector whose orbit satisfies some property. 
For instance, the Hypercyclicity Criterion or the Supercyclicity Criterion. 
We start this section recalling the former one.
However, there are hypercyclic operators which does not satisfy this criterion. For instance, see \cite{RR}.

\begin{thm}[Hypercyclicity Criterion]\label{hypercyclic criterion}
	Let $X$ be an infinite dimensional separable Banach space and let $T$ be a bounded operator on $X$. If there exist a sequence of integers $(n(k))_k$, two dense sets $\D_1$, $\D_2\subset X$ and a sequence of maps $S_{n(k)}:\D_2\to X$ such that:
	\begin{enumerate}
		\item $T^{n(k)}x\to 0 $ for all $x\in \D_1$.
		\item $S_{n(k)}y\to 0 $ for each $y\in \D_2$.
		\item $T^{n(k)}S_{n(k)}y\to y$ for each $y\in \D_2$.
	\end{enumerate}
	Then $T$ is hypercyclic.
\end{thm}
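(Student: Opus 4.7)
The plan is to invoke the Baire Category Theorem on the set of hypercyclic vectors. Using separability, fix a countable basis $(V_j)_{j\in\N}$ of nonempty open sets of $X$. Then the set of hypercyclic vectors is exactly
\[
HC(T) = \bigcap_{j\in\N} \bigcup_{n\in\N} T^{-n}(V_j),
\]
since $x\in HC(T)$ iff $\textup{Orb}_T(x)$ meets every $V_j$. Each $T^{-n}(V_j)$ is open by continuity of $T^n$, so it suffices to show that for every $j$, the open set $U_j:=\bigcup_{n\in\N}T^{-n}(V_j)$ is dense in $X$; by Baire, $HC(T)$ will then be a dense $G_\delta$, in particular nonempty, which proves that $T$ is hypercyclic.

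The core step is the density of each $U_j$. I would fix an arbitrary nonempty open set $W\subset X$ together with $V:=V_j$, and exhibit some $x\in W$ and $n\in\N$ with $T^nx\in V$. Using density of $\D_1$, pick $u\in \D_1\cap W$; using density of $\D_2$, pick $v\in \D_2\cap V$. The key trick is to consider the sequence of perturbed vectors
\[
x_k := u + S_{n(k)}v, \qquad k\in\N.
\]
By hypothesis (2), $S_{n(k)}v\to 0$, so $x_k\to u$ and thus $x_k\in W$ for all $k$ large enough. On the other hand, by linearity of $T^{n(k)}$,
\[
T^{n(k)}x_k = T^{n(k)}u + T^{n(k)}S_{n(k)}v,
\]
whose first summand tends to $0$ by hypothesis (1) applied to $u\in\D_1$, and whose second summand tends to $v$ by hypothesis (3). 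Hence $T^{n(k)}x_k\to v\in V$, and since $V$ is open we also have $T^{n(k)}x_k\in V$ for all $k$ large enough. Choosing any $k$ satisfying both conditions gives $x_k\in W\cap T^{-n(k)}(V)\subset W\cap U_j$, proving density of $U_j$.

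Once density of every $U_j$ is established, the Baire Category Theorem (applicable since $X$ is a complete metric space) yields that $HC(T)=\bigcap_j U_j$ is a dense $G_\delta$, hence nonempty, and any $x\in HC(T)$ is by construction a hypercyclic vector for $T$. There is no substantial obstacle in this argument; the only idea that requires thought is the simultaneous perturbation $x_k=u+S_{n(k)}v$, which decouples the two requirements ``stay in $W$'' and ``land in $V$ after $n(k)$ iterations'' into conditions (1), (2), (3) of the hypothesis. Note also that the assumption that $X$ is infinite dimensional is not actually used in the argument; it is natural context for the statement but plays no role in the proof.
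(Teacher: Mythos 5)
Your proof is correct. Note, however, that the paper does not actually prove this statement: the Hypercyclicity Criterion is only \emph{recalled} there as a classical result (see \cite{BM}), so there is no in-paper argument to compare against line by line. What you give is the standard Birkhoff-transitivity/Baire-category proof, and every step checks out: the identification $HC(T)=\bigcap_j\bigcup_n T^{-n}(V_j)$, the openness of each $T^{-n}(V_j)$, and the perturbation $x_k=u+S_{n(k)}v$ with $u\in\D_1\cap W$, $v\in\D_2\cap V$, which handles the two requirements via hypotheses (1)--(3). It is worth contrasting your route with the technique the paper uses for its $\varepsilon$-analogue (Theorem 1.3): there the author constructs the distinguished vector explicitly as a convergent series $\overline{x}=\sum_k x_k$ by induction, rather than invoking Baire. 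The Baire argument is shorter and yields the stronger conclusion that $HC(T)$ is a dense $G_\delta$; the constructive argument is what survives in the $\varepsilon$-setting, where the approximation condition $\|T^nx-y\|\leq\varepsilon\|y\|$ is closed rather than open and the set of $\varepsilon$-hypercyclic vectors is not obviously a countable intersection of open sets, so the categorical shortcut is unavailable. Your closing remark that infinite-dimensionality is not used is accurate (only completeness and separability enter).
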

Now, we continue with the proof of Theorem \ref{criterion}.

\begin{proof}[Proof of the $\varepsilon$-Hypercyclicity Criterion]
	Let us construct a $\delta$-hypercyclic vector of $T$, for any $\delta>\epsilon$.
Let $(\eta_k)_k\subset \R^+$ be any sequence of positive numbers such that $(k^2\eta_k)_k$ converges to $0$.
Observe that the series $\sum_k\eta_k$ is convergent.
Let $\{z_k:k\in\N\}$ be a countable dense subset of $X$. 
Let $m_0\in\N$ such that 

\[ \|z_0-y_{m_0}\|\leq\varepsilon\|z_0\|,~ \|S_{n(m_0)}y_{m_0}\|<\eta_0 ~\text{and } \|T^{n(m_0)}S_{n(m_0)}y_{m_0}-y_{m_0}\|<\eta_0. \] 
By density of $\D_1$ and continuity of $T$, there is $x_0\in \D_1$ such that $\|x_0\|<\eta_0$ and $\|T^{m_0}x_0-y_{m_0}\|<\eta_0$.
Let $k\geq 1$ and let us assume that $(x_i)_i\subset \D_1$ and $(m_i)_i\subset \N$ are already defined for all $i\leq k-1$. 
Let $\rho_k>0$ be a positive number such that $\|T^{n(m_i)}u\|\leq 2^{-k}$ for all $\|u\|\leq \rho_k$ and for all $i<k$. 
Redefine $\eta_k:= \min\{\eta_k,\rho_k\}$.
Let $m_k$ be an integer such that $m_k>m_{k-1}$, $\|T^{n(m_k)}x_i\| < \eta_k$ for all $i<k$,

\[\|z_k-y_{m_k}\|\leq \varepsilon\|z_k\|,~ \|S_{n(m_k)}y_{m_k}\|< \eta_k ~\text{and } \|T^{n(m_k)}S_{n(m_k)}y_{m_k}-y_{m_k}\|<\eta_k.\]

By density of $\D_1$, there is $x_k\in \D_1$ such that $\|x_k\| < \eta_k$ and $\|T^{n(m_k)}x_k-y_{m_k}\|<\eta_k$. \\

Now, since $\|x_k\|<\eta_k$ for all $k\in \N$, the vector $\overline{x}=\sum_{k=0}^\infty x_k\in X$ is well defined. 
We claim that $\overline{x}$ is a $\delta$-hypercyclic vector for $T$, for all $\delta>\varepsilon$. Indeed, let $j\in \N$. Then:

\begin{align*}
	\|T^{n(m_j)}\overline{x}-z_j\| &\leq \sum_{k=0}^{j-1}\|T^{n(m_j)}x_k\| + \|T^{n(m_j)}x_j-y_{m_j}\|+\|y_{m_j}-z_j\|+ \sum_{k=j+1}^\infty\|T^{n(m_j)}x_k\|\\
	&\leq (j+1)\eta_j + \varepsilon\|z_j\|+ \sum_{k=j+1}^\infty 2^{-k}.
\end{align*}
\noindent Now, let $z\in X$ and let $(j(l))_l$ be an increasing sequence such that $(z_{j(l)})_l$ converges to $z$. Then

\begin{align*}
	\|T^{n(m_{j(l)})}\overline{x}-z\| &\leq \|T^{n(m_{j(l)})}\overline{x}-z_{j(l)}\|+\|z_{j(l)}-z\| \\
	&\leq (j(l)+1)\eta_{j(l)} + \varepsilon\|z_{j(l)}\|+ \sum_{k={j(l)+1}}^\infty 2^{-k}+\|z_{j(l)}-z\|,
\end{align*}

\noindent expression which tends to $\varepsilon\|z\|$ as $l$ tends to infinity. 
Therefore, if $\delta>\varepsilon$ and $z\neq 0$, for $l$ large enough, we have that $\|T^{n(m_{j(l)})}\overline{x}-z\| \leq \delta \|z\|$.
\end{proof}

\begin{rem}\label{remark limsup}
   From the proof, notice that if the sequence $(z_{j(l)})_l$ converges to $z$, then
   \[\limsup _{l\to\infty} \|T^{n(m_{j(l)})}\overline{x}-z\|\leq \varepsilon\|z\|.\]
\end{rem}
\begin{rem}
	The previous criterion can be applied to the operators constructed in \cite{BGM} and \cite{B}. In fact, this can be done similarly as we do in Theorem \ref{ehypercyclic2}.
\end{rem}

By definition, every hypercyclic operator is $\varepsilon$-hypercyclic for all $\varepsilon>0$. In \cite{BGM} it is shown that the converse is also true, 
i.e. if an operator is $\varepsilon$-hypercyclic for each $\varepsilon>0$, then it is hypercyclic.
In this line, we have the following result.
\begin{prop}\label{hyper implies ehyper}
	Let $X$ be a separable infinite dimensional Banach space and let $T$ be a bounded operator on $X$. If $T$ satisfies the Hypercyclicity Criterion, then it satisfies  the $\varepsilon$-Hypercyclicity Criterion for every $\varepsilon>0$.
\end{prop}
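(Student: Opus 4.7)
The plan is to convert the data of the Hypercyclicity Criterion into data satisfying the $\varepsilon$-Hypercyclicity Criterion via a diagonal extraction. Let $\D_1,\D_2\subset X$, the increasing sequence $(n(k))_k$, and the maps $S_{n(k)}\colon\D_2\to X$ be furnished by the Hypercyclicity Criterion. Since the $\varepsilon$-Hypercyclicity Criterion only requires $\D_2$ to be countable, I may assume without loss of generality that $\D_2$ is a countable dense set (if it is not, replace it by a countable dense subset and restrict each $S_{n(k)}$ to it, the three limit conditions still holding).

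Next, I would fix an enumeration $\D_2=\{z_j:j\in\N\}$ and build a new enumeration $(y_k)_{k\in\N}$ of $\D_2$, with repetitions, in which every $z_j$ appears infinitely often (for instance, via a pairing bijection between $\N$ and $\N^2$). This already guarantees the richness condition: given $\varepsilon>0$ and $x\in X\setminus\{0\}$, density of $\D_2$ produces some $z_j$ with $\|z_j-x\|<\varepsilon\|x\|$, and then $y_k=z_j\in B(x,\varepsilon\|x\|)$ for infinitely many $k$, so the hypothesis of the $\varepsilon$-Hypercyclicity Criterion is satisfied for every $\varepsilon>0$ simultaneously, with the same $\D_2$.

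The key step is the diagonal extraction. For each $k$, using conditions (2) and (3) of the Hypercyclicity Criterion applied to the single element $y_k\in\D_2$, I can choose $m(k)\in\N$ strictly larger than $m(k-1)$ such that
\[\|S_{n(m(k))}y_k\|<\tfrac1k\quad\text{and}\quad\|T^{n(m(k))}S_{n(m(k))}y_k-y_k\|<\tfrac1k.\]
Setting $n'(k):=n(m(k))$ (an increasing subsequence of $(n(k))_k$) and $S'_{n'(k)}:=S_{n(m(k))}$, conditions (2) and (3) of the $\varepsilon$-Hypercyclicity Criterion are immediate. Condition (1) is inherited from the original Hypercyclicity Criterion, since $(n'(k))_k$ is a subsequence of $(n(k))_k$ and convergence to $0$ on $\D_1$ is preserved under passage to subsequences.

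There is no substantive obstacle here; the main subtlety is simply that the Hypercyclicity Criterion guarantees the limits in (2) and (3) for each $y$ \emph{individually}, whereas the $\varepsilon$-Hypercyclicity Criterion asks for them along the diagonal $y_k$. The diagonal choice of $m(k)$ resolves this point, while the freedom to enumerate $\D_2$ with infinite repetitions simultaneously supplies the density-with-repetitions needed for the $\varepsilon$-hypothesis to hold for all $\varepsilon>0$.
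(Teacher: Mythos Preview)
Your proposal is correct and follows essentially the same approach as the paper: reduce to a countable dense $\D_2$, then perform a diagonal extraction to pass from the pointwise conditions of the Hypercyclicity Criterion to the diagonal conditions (2) and (3) of the $\varepsilon$-Hypercyclicity Criterion, with condition (1) inherited along the subsequence. The only cosmetic difference is in how the ``infinitely many $k$'' requirement is secured: you build an enumeration with infinite repetitions of each element, whereas the paper simply enumerates $\D_2$ without repetitions and observes that, since $\D_2$ is dense, its intersection with any nonempty open ball $B(x,\varepsilon\|x\|)$ is already infinite---so the repetition device is unnecessary, though harmless.
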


\begin{proof}
	Let $T$ be a bounded operator satisfying the Hypercyclicity Criterion.
Let $\D_1,\D_2\subset X$, $(n(k))_k\subset\N$ and $(S_{n(k)})_k$ given by the mentioned criterion. 
Since $X$ is separable, without loss of generality, we can assume that $\D_2$ is a countable dense set. 
Let us enumerate ${\D_2:=\{y_k:k\in\N\}}$. 
To achieve the $\varepsilon$-Hypercyclic-Criterion we only need to construct a subsequence of $(n(k))_k$, namely $(m(k))_k$, which satisfies hypothesis $(2)$ and $(3)$ of Theorem \ref{criterion}. 
To this end, let us define $m(0)\in \{n(k):k\in\N\}$ such that $\|S_{m(0)}y_0\| \leq 1$ and $\|T^{m(0)}S_{m(0)}y_0- y_0\|\leq 1$.
Let $k\geq 1$ and suppose that we have constructed $(m(j))_j$ for all $j\leq k-1$. 
Let us fix ${m(k)\in  \{n(j):j\in\N\}}$ such that

\[m(k)>m(k-1),~\hspace{0.5cm}\|S_{m(k)}y_k\| \leq k^{-1},~\hspace{0.5cm}~\text{ and }\hspace{0.5cm} \|T^{m(k)}S_{m(k)}y_k- y_k\|\leq k^{-1}.\] 

Now, it is straightforward that hypothesis $(1)$, $(2)$ and $(3)$ of the $\varepsilon$-Hypercyclicity Criterion are satisfied for the sequence $(m(k))_k$ and the maps $(S_{m(k)})_k$. 
Finally, since $\D_2$ is dense, the intersection of $\D_2$ with any open set must be an infinite set.
Therefore, $T$ satisfies the $\varepsilon$-Hypercyclicity Criterion for each $\varepsilon>0$.
\end{proof}

Proposition \ref{product space}, combined with Theorem \ref{ehypercyclic2}, allows us to construct $\varepsilon$-hypercyclic operators which are not hypercyclic in several classical spaces. 
Bearing Proposition \ref{hyper implies ehyper} in mind, this result can be seen as a generalization of the necessity part of the following Theorem of Bès and Peris: $T$ satisfies the Hypercyclicity Criterion if and only if $T\oplus T$ is hypercyclic. See \cite[Theorem 2.3]{BP}.
\begin{prop}\label{product space}
	Let $X$ and $Y$ be two separable infinite dimensional Banach spaces and let $\varepsilon\in(0,1)$. 
	Let $T\in\mathcal{L}(X)$ satisfying the Hypercyclicity Criterion.
	Let $S\in\mathcal{L}(Y)$ satisfying the $\varepsilon$-Hypercyclicity Criterion. 
	Further, assume that the sequences of integers provided by both criteria are the same. 
	Then, the operator $T\oplus S$ is $\delta$-hypercyclic on $X\oplus Y$, for all $\delta >\varepsilon$, where $X\oplus Y$ is equipped with the norm of the maximum.
\end{prop}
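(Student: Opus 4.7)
The plan is to adapt the proof of the $\varepsilon$-Hypercyclicity Criterion (Theorem~\ref{criterion}) to the product, constructing a $\delta$-hypercyclic vector for $T\oplus S$ directly. Let $\D_1^T,\D_2^T\subset X$ and maps $R^T_k:\D_2^T\to X$ witness the Hypercyclicity Criterion for $T$ along the sequence $(n(k))_k$; let $\D_1^S\subset Y$, $\D_2^S=\{y_k^S:k\in\N\}\subset Y$, and maps $R^S_{n(k)}:\D_2^S\to Y$ witness the $\varepsilon$-Hypercyclicity Criterion for $S$ along the same sequence. Fix a countable dense $(z_j)_j\subset X$ and set
\[
\mathcal{E}_2:=\{(z_j,y_k^S):j,k\in\N\}\cup\{(z_j,0):j\in\N\}\subset X\oplus Y.
\]
A short case distinction (treating separately vectors with nonzero and with zero $Y$-coordinate) shows that $\mathcal{E}_2$ has the $\varepsilon$-covering property in $X\oplus Y$ for the max norm, the former case being handled by the $\varepsilon$-covering of $\D_2^S$ combined with the density of $(z_j)_j$, the latter by the density of $(z_j)_j$ alone.

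The construction itself follows the template of the proof of Theorem~\ref{criterion}. Fix a dense sequence $(\xi_i)_i\subset X\oplus Y$ of nonzero vectors and positive numbers $(\eta_i)_i$ with $\sum_i\eta_i<\infty$, to be tightened during the induction through auxiliary constants $\rho_i$. Inductively choose $k_0<k_1<\cdots$ in $\N$ and $(u_i,v_i)\in\D_1^T\times(\D_1^S\cup\{0\})$. At step $i$, first select a target $(z_{j_i},\tilde y_i)\in\mathcal{E}_2$ with $\|(z_{j_i},\tilde y_i)-\xi_i\|_\infty\leq\varepsilon\|\xi_i\|_\infty$. When the $Y$-part of $\xi_i$ is nonzero, the $\varepsilon$-covering property yields infinitely many $k$ with $y_k^S\in B((\xi_i)_Y,\varepsilon\|(\xi_i)_Y\|)$; we additionally force $\tilde y_i=y_{k_i}^S$ for some such $k_i$ arbitrarily large. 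When that part vanishes, we set $\tilde y_i=0$ and keep $k_i$ free. By enlarging $k_i$ further, we invoke the diagonal limits of both criteria and the density of $\D_1^T,\D_1^S$ (together with continuity of $T^{n(k_i)}$, $S^{n(k_i)}$) to find $u_i\in\D_1^T$, and $v_i\in\D_1^S$ (or $v_i=0$ in the degenerate case), satisfying $\|u_i\|,\|v_i\|<\eta_i$, $\|T^{n(k_i)}u_i-z_{j_i}\|<\eta_i$, $\|S^{n(k_i)}v_i-\tilde y_i\|<\eta_i$, and $\|T^{n(k_i)}u_j\|,\|S^{n(k_i)}v_j\|<\eta_i$ for all $j<i$. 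Finally, apply the $\rho_i$-trick from the proof of Theorem~\ref{criterion} in each coordinate, shrinking $\eta_i$ so that $\|T^{n(k_j)}u_i\|,\|S^{n(k_j)}v_i\|\leq 2^{-i}$ for all $j<i$.

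Set $\overline{w}:=\sum_i(u_i,v_i)\in X\oplus Y$, well-defined by the summability of $(\eta_i)_i$. For any nonzero $\xi\in X\oplus Y$ and any subsequence $(\xi_{i(\ell)})_\ell\to\xi$, componentwise telescoping as in the proof of Theorem~\ref{criterion} yields
\[
\|(T\oplus S)^{n(k_{i(\ell)})}\overline{w}-\xi\|_\infty\leq (i(\ell)+1)\eta_{i(\ell)}+\sum_{k>i(\ell)}2^{-k}+\varepsilon\|\xi_{i(\ell)}\|_\infty+\|\xi_{i(\ell)}-\xi\|_\infty,
\]
whose limsup is $\varepsilon\|\xi\|_\infty<\delta\|\xi\|_\infty$, so $\overline{w}$ is $\delta$-hypercyclic for $T\oplus S$. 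The main obstacle is reconciling the two criteria, which treat times asymmetrically: $T$ can approximate any target at any sufficiently large $n(k)$, while $S$ only approximates $y_k^S$ at the matched time $n(k)$. The resolution is that the $\varepsilon$-covering property of $\D_2^S$ supplies infinitely many $k$ with $y_k^S$ close to $(\xi_i)_Y$, so \emph{index of target equals index of time} can be imposed on the $Y$-side while retaining freedom to choose the time large on the $X$-side; the case $(\xi_i)_Y=0$ is absorbed by the pairs $(z_j,0)\in\mathcal{E}_2$.
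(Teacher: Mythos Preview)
Your argument is correct, and it is genuinely simpler than the paper's proof. The paper works in two stages: it first runs the proof of Theorem~\ref{criterion} on $Y$ alone to produce a $\delta$-hypercyclic vector $y$ for $S$, records for each point $v_k$ of a dense sequence in $Y$ an infinite set $\N_k$ of times along which $S^{n(\cdot)}y$ approaches $v_k$, and then launches a \emph{second}, $\N^2$-indexed induction (with a custom total order on $\N^2$) to build a hypercyclic vector $x$ for $T$ whose $T$-orbit is dense along every $\N_k$. Only then are the two pieces combined. You instead run a single induction directly on $X\oplus Y$: at step $i$ the $\varepsilon$-covering of $\D_2^S$ hands you infinitely many admissible $k$ with $y_k^S$ near $(\xi_i)_Y$, so you may synchronize ``target index $=$ time index'' on the $Y$-side while using the freedom of the full Hypercyclicity Criterion on the $X$-side at that same time $n(k_i)$. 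This collapses the paper's two-pass, doubly-indexed construction into one pass and avoids the bookkeeping of the partition $(\N_k)_k$ and the order $\preceq$ on $\N^2$. What the paper's approach buys is a slightly stronger intermediate statement (the vector $x$ it builds is hypercyclic for $T$ along each $\N_k$ separately), which it later reuses in the sketch of Proposition~\ref{almost criterion}; your route trades that for brevity.

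One small point to tighten: take your dense sequence $(z_j)_j$ inside $\D_2^T$ (which can be assumed countable), so that the maps $R^T_{n(k)}$ apply directly to $z_{j_i}$. As written you only say $(z_j)_j\subset X$ is dense, which forces an extra (harmless) approximation of $z_{j_i}$ by an element of $\D_2^T$ before you can produce $u_i$.
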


Before proceeding with the proof of Proposition \ref{product space}, we recall the following simple fact: If $T$ satisfies the Hypercyclicity Criterion under some sequence $(n(k))_k$, then, it will satisfies the criterion for any subsequence $(n(k(j)))_j$.

\begin{proof}

	Let $(n(k))_k$ be an increasing sequence of integers, let $\D^X_1,\D^X_2\subset X$ be two dense sets and let $(U_{n(k)})_k$ a sequence of maps, provided by the Hypercyclicity Criterion for $T$.
Let $\D^Y_1\subset Y$ be a dense set, let $\D^Y_2:=\{z_k:k\in \N\}\subset Y$ and let $(V_{n(k)})_k$ be a sequence of maps provided by the $\varepsilon$-Hypercyclicity Criterion for $S$, all of them related to the sequence $(n(k))_k$. \\

\noindent Let $(v_k)_k \subset Y\setminus \{0\}$ be a dense sequence such that $v_i\neq v_j$ if $i\neq j$. 
Summarizing the constructive proof of Theorem \ref{criterion}, we can obtain a subsequence of $(z_k)_k$, which we still denote by $(z_k)_k$, a sequence $(y_k)_k\subset \D_1^Y$ and a fast decreasing null sequence $(\eta_k)_k\subset\R^+$ such that:
\begin{itemize}
	\item $\|v_k-z_k\|\leq \varepsilon\|v_k\|$, for all $k\in\N$,
	\item $\|y_k\|\leq \eta_k$  for all $k\in \N$,
	\item $\|S^{n(k)}y_i\|\leq \eta_k$ for all $i<k$,
	\item $\|S^{n(k)}y_k-z_k\|\leq \eta_k$ for all $k\in\N$.
	
\end{itemize}
Further, the vector $y=\sum_k y_k$ is $\delta$-Hypercyclic, for all $\delta>\varepsilon$. 
For each $i\in\N$, let us consider an increasing sequence $(k(i,j))_j$ such that $v_{k(i,j)}$ converges to $v_i$, as $j$ tends to infinity. 
Inductively, we define the sets $\N_0:=\{k(0,j):j\in\N\}$ and, for $i\geq 1$, $\N_i:=\{k(i,j):j\in\N\}\setminus \bigcup_{k<i} \N_k$. 
Since the sequence $(v_k)_k$ is injective, the sets $\N_i$ are infinite for each $i\in\N$. 
Observe that, by Remark~	\ref{remark limsup}, the expression
\begin{align}\label{second coordinate}
	\limsup_{t\in \N_k,~t\to \infty}\|S^{n(t)}y-v_k\|\leq \varepsilon\|v_k\|,  
\end{align}
holds true for each $k\in\N$. \\

\noindent Now, let us construct a hypercyclic vector $x$ of $T$ adapted to $y$ in the following sense: the set $\{T^nx:n\in\N_j\}$ is dense in $X$ for each $j\in\N$.
Assume that $\D^X_2$ is a countable set and fix an enumeration of it, i.e. $\D^X_2=\{w_l:l\in\N\}$. 
Let us define the following total order on $\N^2$: for $(i,j),(k,l)\in \N^2$, we write 
\[(i,j)\preceq (k,l) ~~~~\text{if } \hspace{0.5cm} i+j< k+l ~\hspace{0.2cm}  \text{or } \hspace{0.2cm} i+j= k+l \wedge i\geq k.\]
Let $m(0,0)\in \N_0$ and $x_{0,0}\in \D_1^X$ such that $\|U_{m(0,0)}w_0\|\leq 1$, $\|x_{0,0}\|\leq 1$ and ${\|T^{m(0,0)}x_0-w_0\|\leq 1}$. 
Now, let us proceed by induction. Let $k,l\in\N$.  
Suppose that we have constructed $m(i,j)$ and $x_{i,j}$ for all $(i,j)\prec (k,l)$. Let $m(k,l)\in \N_k$ and $x_{k,l}\in \D_1^X$ such that 
\begin{itemize}
	\item $m(k,l)>m(i,j)$ for all $(i,j)\prec (k,l)$.
	\item $\|T^{m(k,l)}x_{i,j}\|\leq \rho(k,l)$, for all $(i,j)\prec (k,l)$,
	\item $\|T^{m(i,j)}x_{k,l}\|\leq 2^{-k-l}$, for all $(i,j)\prec (k,l)$,
	\item $\|U_{m(k,l)}w_l\|< \rho(k,l)$,
	\item $\|x_{(k,l)}\|\leq \rho(k,l)$,
	\item $\|T^{m(k,l)}x_{(k,l)}-w_l\|\leq \rho(k,l)$,  
\end{itemize}
where $\rho:\N^2\to\R^+$ is a decreasing function (for $\preceq$) such that $(k+l)^3\rho(k,l)$ tends to $0$ whenever $(k,l)$ tends to infinity through the order $\preceq$. 
Thus, we claim that the vector $x=\sum_{i,j\geq 0}x_{i,j}$ is well defined and a hypercyclic vector of $T$. 
Moreover, for each $i\in\N$, the set 
\begin{align}\label{first coordinate}
	\{T^{m(i,j)}x:j\in\N\}~\text{is dense in }X. 
\end{align}
Indeed, the claim follows from the next computation and the fact that $(w_l)_l$ is dense in $X$. Let $(k,l)\in\N^2$. Then we get
\begin{align*}
	\|T^{m(k,l)}x-w_l \|&\leq \sum_{(i,j)\prec (k,l)} \|T^{m(k,l)}x_{i,j} \| +\|T^{m(k,l)}x_{k,l}-w_l\| + \sum_{(k,l)\prec (i,j)}\|T^{m(k,l)}x_{i,j}\|\\ 
	&\leq \rho(k,l)\left(\dfrac{(k+l+1)(k+l+2)}{2}+1\right) +\sum_{(k,l)\prec (i,j)}2^{-i-j},
\end{align*}
where the last expression tends to $0$ as $k$ tends to infinity. 
Observe that, by construction, the sequence $(m(i,j))_j\subset \N_i$ for all $i\in\N$.\\

Let us equip the product space $X\oplus Y$ with the norm of the maximum, i.e. $\|(a,b)\|=\max\{\|a\|_X,\|b\|_Y\}$, for all $(a,b)\in X\oplus Y$.
Let us prove that the vector $(x,y)$ is $\delta$-hypercyclic for $T\oplus S$, for all $\delta>\varepsilon$.
Combining \eqref{second coordinate} and \eqref{first coordinate}, we obtain that
\[\inf_{j\in \N}\|(T\oplus S)^{m(i,j)}(x,y)-(a,v_k)\|\leq \varepsilon\|(0,v_k)\|\leq \varepsilon\|(a,v_k)\|,~ \forall~a\in X,\forall~k\in\N.\]
Let $(a,b)\in X\oplus Y\neq (0,0)$, using triangle inequality and the previous inequality we get
\[\inf_{n\in\N}\|(T\oplus S)^{n}(x,y)-(a,b)\|\leq \inf_{k\in\N}\varepsilon\|(a,v_k)\| +\|(a,b)-(a,v_k)\|\leq \varepsilon \|(a,b)\|. \]
Since $(v_k)_k$ is dense in $Y$ and $(a,b)\neq (0,0)$, by definition of infimum we finally conclude that $(x,y)$ is $\delta$-hypercyclic for each $\delta >\varepsilon$. 
	
\end{proof}

\section{Construction of epsilon-hypercyclic operators}\label{construction2}

In this section we prove Theorem \ref{Theorem product spaces}, which is a technical version of Theorem \ref{product space corollary}. We start with some applications of Theorem \ref{product space corollary}.

\begin{cor}\label{ehypercyclic corollary2}
	The following Banach spaces admit $\varepsilon$-hypercyclic operator which are not hypercyclic: 
	$\ell^p(X)$ for $p\in [1,+\infty)$ and $c_0(X)$ whenever $X$ is a (finite or infinite dimensional) separable Banach space.
\end{cor}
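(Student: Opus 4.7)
The plan is to deduce the corollary directly from Theorem \ref{product space corollary}, which reduces the task to verifying two hypotheses for each space $Z \in \{\ell^p(X) : p \in [1,\infty)\} \cup \{c_0(X)\}$: namely, that $Z$ is separable, and that $Z$ contains a complemented subspace isomorphic to $\ell^p(\N)$ (respectively $c_0(\N)$).

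Separability of $Z$ is inherited from $X$ by a standard argument: if $D \subset X$ is countable and dense, then the collection of finitely supported sequences with entries in $D \cup \{0\}$ is a countable dense subset of $Z$.

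For the complemented subspace, the strategy is to diagonalize $X$ to a one-dimensional subspace. Fix a norm-one vector $e \in X$, and use Hahn-Banach to obtain a norm-one functional $e^* \in X^*$ with $e^*(e) = 1$. Consider the closed subspace
\[ E := \{(\alpha_n e)_n : (\alpha_n)_n \in \ell^p(\N)\} \quad (\text{resp. } c_0(\N)) \]
of $Z$. The map $(\alpha_n)_n \mapsto (\alpha_n e)_n$ is clearly a linear isometry from $\ell^p(\N)$ (resp.\ $c_0(\N)$) onto $E$. To show $E$ is complemented, define $P : Z \to Z$ by $P((x_n)_n) := (e^*(x_n)e)_n$. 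Since $|e^*(x_n)| \leq \|x_n\|$ and $\|e\| = 1$, a coordinate-wise estimate yields $\|P((x_n)_n)\|_Z \leq \|(x_n)_n\|_Z$, so $P$ is a bounded linear operator; and $e^*(e) = 1$ makes $P$ the identity on $E$ and $P(Z) = E$. Hence $P$ is a bounded projection onto $E$.

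There is no real obstacle here: once the projection $P$ is written down, both hypotheses of Theorem \ref{product space corollary} are verified, and the corollary follows at once by applying that theorem to each $Z$.
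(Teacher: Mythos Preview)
Your proof is correct and follows exactly the same route as the paper, which simply states that the result is a direct consequence of Theorem~\ref{product space corollary}. You have merely spelled out the verification of the two hypotheses (separability and the existence of a complemented copy of $\ell^p(\N)$ or $c_0(\N)$) that the paper leaves implicit.
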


\begin{proof} 
	It is a direct consequence of Theorem \ref{product space corollary}.
\end{proof}
\begin{cor}\label{Lp spaces}
	Let $X$ be a separable infinite dimensional $L^p$ space, with $p\in[1,\infty)$. Then $X$ admits an $\varepsilon$-hypercyclic which is not hypercyclic.
\end{cor}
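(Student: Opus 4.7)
The plan is to apply Theorem \ref{product space corollary} by producing a complemented subspace of $X$ isomorphic to $\ell^p(\N)$. Writing $X=L^p(\Omega,\Sigma,\mu)$, I would first argue that the measure space carries a sequence $(A_n)_{n\in\N}$ of pairwise disjoint measurable sets with $0<\mu(A_n)<\infty$. This follows from the separability and infinite-dimensionality of $L^p(\mu)$: after restricting to a $\sigma$-finite support of $\mu$, either the non-atomic part has positive measure (and may be split into infinitely many pieces of positive finite measure) or $\mu$ is purely atomic with infinitely many atoms of finite measure, since otherwise $L^p(\mu)$ would be finite-dimensional.

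Next I would set $f_n:=\mu(A_n)^{-1/p}\mathbf{1}_{A_n}$ and observe that disjointness of supports yields $\bigl\|\sum_n a_n f_n\bigr\|_p=\bigl(\sum_n |a_n|^p\bigr)^{1/p}$ for every finitely supported sequence of scalars, so $Z:=\overline{\spann}(f_n:n\in\N)$ is isometric to $\ell^p(\N)$. To realize $Z$ as a complemented subspace of $X$, I would introduce the candidate projection
\[
P(g):=\sum_{n\in\N}\frac{1}{\mu(A_n)}\Bigl(\int_{A_n} g\,d\mu\Bigr)\mathbf{1}_{A_n},
\]
and verify its norm-one boundedness via Hölder's inequality, which gives $\bigl|\int_{A_n} g\,d\mu\bigr|^p\leq \mu(A_n)^{p-1}\|g\mathbf{1}_{A_n}\|_p^p$ and hence $\|P(g)\|_p^p\leq \sum_n \|g\mathbf{1}_{A_n}\|_p^p\leq \|g\|_p^p$. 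The image of $P$ coincides with $Z$, so $Z$ is complemented in $X$.

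Once this is established, Theorem \ref{product space corollary} directly produces an $\varepsilon$-hypercyclic operator on $X$ that is not hypercyclic. I do not foresee any serious obstacle: the measure-theoretic extraction of $(A_n)$ and the Hölder estimate for $P$ are both standard, and the real work has already been done in Theorem \ref{product space corollary}. The proof is essentially a reduction of a concrete class of spaces to the abstract complementation hypothesis already treated.
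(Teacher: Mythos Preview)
Your proposal is correct and follows exactly the paper's approach: the paper's proof simply states that any separable infinite-dimensional $L^p$ space admits a complemented subspace isomorphic to $\ell^p(\N)$ and then invokes Theorem~\ref{product space corollary}. You have merely supplied the standard details (disjoint sets of positive finite measure, the isometric embedding of $\ell^p$, and the conditional-expectation projection) that the paper leaves implicit.
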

\begin{proof}
	Any separable infinite dimensional $L^p$ space admits a complementable subspace isomorphic to $\ell^p(\N)$. Therefore, we can apply Theorem \ref{product space corollary}.
\end{proof}
For the next corollary we recall the following classical result of Sobzyck \cite{S}:
Let $X$ be separable Banach space and $E$ be closed subspace of $X$. Let $T:E\to c_0(\N)$ be a bounded operator. Then there exists a bounded operator $\tilde{T}:X\to c_0(\N)$ such that $\tilde{T}|_E=T$. 
We denote by $C(K)$ the Banach space of continuous functions on the compact space $K$. This space is endowed with the norm of the maximum. 
\begin{cor}\label{C(K) space}
	Let $X$ be a separable infinite dimensional containing $c_0(\N)$. Then $X$ admits an $\varepsilon$-hypercyclic which is not hypercyclic. Particularly, all separable infinite dimensional $C(K)$ spaces enjoy this property.
\end{cor}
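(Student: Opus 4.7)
The plan is to reduce this to Theorem \ref{product space corollary} by showing that a copy of $c_0(\N)$ sitting inside a separable Banach space is automatically complemented. Once that is established, the first assertion is immediate, and the second will follow by verifying that every separable infinite dimensional $C(K)$ space embeds $c_0(\N)$.

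First, I would invoke Sobczyk's theorem as recalled just before the statement. Let $E\subset X$ be a closed subspace isomorphic to $c_0(\N)$, and let $\phi:E\to c_0(\N)$ denote an isomorphism. Since $X$ is separable, Sobczyk's theorem produces a bounded extension $\widetilde{\phi}:X\to c_0(\N)$ with $\widetilde{\phi}|_E=\phi$. Then the composition $P:=\phi^{-1}\circ\widetilde{\phi}:X\to E$ is a bounded linear projection onto $E$, because $P|_E=\phi^{-1}\circ\phi=\mathrm{Id}_E$. Hence $E\cong c_0(\N)$ is a complemented subspace of $X$, and Theorem \ref{product space corollary} applies to produce an $\varepsilon$-hypercyclic operator on $X$ which is not hypercyclic.

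For the second assertion, I would argue that every separable infinite dimensional $C(K)$ space contains an isomorphic copy of $c_0(\N)$. Since $C(K)$ is infinite dimensional, $K$ must be infinite; pick a sequence $(t_n)_n$ of pairwise distinct points in $K$. Using that $K$ is compact Hausdorff (hence normal) one can extract by a standard disjointification argument a sequence of pairwise disjoint nonempty open sets $(U_n)_n$ together with points $s_n\in U_n$; Urysohn's lemma then yields functions $f_n\in C(K)$ with $0\leq f_n\leq 1$, $f_n(s_n)=1$, and $\mathrm{supp}(f_n)\subset U_n$. The disjointness of the supports, combined with the sup norm of $C(K)$, implies that $\|\sum a_nf_n\|_\infty=\sup_n|a_n|$ for any finitely supported scalar sequence, so $(f_n)_n$ spans an isometric copy of $c_0(\N)$ inside $C(K)$. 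Applying the first part concludes.

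The only technical point that might require care is the disjointification step used to build the $f_n$'s: depending on whether $(t_n)$ has an accumulation point or not, one either uses compactness/Hausdorffness to separate a tail of the sequence from a chosen point by disjoint neighborhoods, or selects disjoint neighborhoods directly if the points are isolated enough. This is a standard argument, so I do not expect a genuine obstacle here; the real content of the corollary is Sobczyk's automatic complementation, which reduces everything to the already proven Theorem \ref{product space corollary}.
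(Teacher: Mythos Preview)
Your proposal is correct and, for the main assertion, follows exactly the paper's route: invoke Sobczyk's theorem to upgrade ``contains $c_0(\N)$'' to ``contains a complemented copy of $c_0(\N)$'', then apply Theorem~\ref{product space corollary}. Your spelling out of the projection $P=\phi^{-1}\circ\widetilde{\phi}$ is just the standard justification the paper leaves implicit.

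For the $C(K)$ case there is a minor difference worth noting. The paper observes that separability of $C(K)$ forces $K$ to be metrizable and then cites \cite[Proposition~4.3.11]{AK} to get a complemented isometric copy of $c_0(\N)$ directly. You instead build an isometric copy of $c_0(\N)$ by hand via disjointly supported Urysohn functions and then appeal to the first part (Sobczyk) for complementation. Your argument is more self-contained and does not require identifying $K$ as metrizable; the paper's is shorter because it outsources the construction to a reference. Both are perfectly valid, and your disjointification step is indeed routine in an infinite compact Hausdorff space.
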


\begin{proof}
	By Sobzyck Theorem, $c_0(\N)$ is complementable on $X$. Therefore, by Theorem \ref{product space corollary}, $X$ admits an $\varepsilon$-hypercyclic which is not hypercyclic.
	Now, let us assume that $X$ is a separable infinite dimensional $C(K)$. Then, $K$ must be an infinite metrizable compact set. Hence, $X$ admits a complemented subspace isometric to $c_0(\N)$. For details see \cite[Proposition 4.3.11]{AK}.
\end{proof}

To prove Theorem \ref{product space corollary}, we show first the existence of $\varepsilon$-hypercyclic vector which are not hypercyclic in a particular class of Banach spaces, using mainly Bayart's ideas, \cite{B}.

\begin{thm}\label{ehypercyclic2}
	Let $X$ and $Y$ be two infinite dimensional separable Banach spaces. Assume that $Y$ admits a $1$-unconditional basis $(f_n)_n$ such that the associated backward shift operator is continuous. Then, for every $\varepsilon>0$, the space $\bigoplus_Y X$, related to $(f_n)$, admits an $\varepsilon$-hypercyclic operator which is not hypercyclic. 

\end{thm}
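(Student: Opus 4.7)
The approach adapts Bayart's construction from \cite{B} for $\ell^2(\N)$ to the bundled setting $\bigoplus_Y X$. I would write a typical element as $\mathbf{x}=(x_n)_n$ and define a weighted backward shift
\[ T\mathbf{x} = (\lambda_1 x_1,\ \lambda_2 x_2,\ \lambda_3 x_3,\ \ldots), \]
where $(\lambda_n)_n$ is a carefully tuned positive sequence. The $1$-unconditionality of $(f_n)_n$ combined with the continuity of the backward shift on $Y$ makes $T$ bounded as soon as $(\lambda_n)$ is bounded: for any $\mathbf{x}\in\bigoplus_Y X$,
\[ \|T\mathbf{x}\| = \Bigl\|\sum_n \lambda_{n+1}\|x_{n+1}\|_X f_n\Bigr\|_Y \leq \bigl(\sup_n \lambda_n\bigr)\,\|B_Y\|\,\|\mathbf{x}\|, \]
so that the norm control is purely abstract. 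Insisting that $\liminf_n \lambda_n > 1$ will provide the spreading needed to apply the $\varepsilon$-Hypercyclicity Criterion.

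To invoke Theorem \ref{criterion}, I would take $\D_1$ to be the set of finitely supported sequences in $\bigoplus_Y X$, which is dense and on which $T^{n(k)}\mathbf{x}\to 0$ whenever $n(k)\to\infty$. The delicate ingredient is $\D_2$: its density condition is \emph{relative}, so a plain dense countable set does not suffice. I would build $\D_2$ from a countable dense subset $D$ of the unit sphere of $\bigoplus_Y X$, consisting of finitely supported elements with coordinates in a countable dense subset of $X$, and then dilate by dyadic rationals. The scaling invariance of the inequality $\|y_k-\mathbf{x}\|\leq \varepsilon\|\mathbf{x}\|$ together with the density of $D$ on the unit sphere produces infinitely many such approximants $y_k$ around any $\mathbf{x}\neq 0$. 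The maps $S_{n(k)}$ are then the natural forward shifts on the support of $y_k$, renormalized by the reciprocal weight products, so that $T^{n(k)}S_{n(k)}y_k = y_k$ exactly; choosing $n(k)$ growing fast enough relative to the supports and norms of the enumerated $y_k$ forces $\|S_{n(k)}y_k\|\to 0$ via the decay of the coefficients of any element of $Y$.

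The non-hypercyclicity portion is where one really has to follow Bayart's blueprint. The weights $(\lambda_n)$ are engineered so that, along a sparse sequence of indices, each iterate $T^n\mathbf{z}$ is forced into a structurally restricted shape; an obstruction vector $\mathbf{y}^*\in\bigoplus_Y X$ is then designed by prescribing coordinates at these indices that no such shape can match within a uniform relative distance. Rigidity here comes from the controlled growth of the finite weight products $\prod_{j=a}^{b}\lambda_j$, which sandwiches $\|T^n\mathbf{z}\|$ coordinate-wise between quantities determined by $\|\mathbf{z}\|$ and $n$.

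The main obstacle is the tension built into the choice of $(\lambda_n)$: the weights must be large enough to force $\|S_{n(k)}y_k\|\to 0$ and to drive mixing, yet bounded and sufficiently regular both to keep $T$ bounded (using only $1$-unconditionality and the bounded backward shift on $Y$) and to admit an obstruction vector witnessing the lack of hypercyclicity. Transporting Bayart's combinatorial argument from the concrete Hilbert setting to one where the only available tool is the norm estimate above constitutes the technical heart of the proof, and is what I expect to be the hardest part to write rigorously.
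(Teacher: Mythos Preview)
Your proposal has a structural gap: a \emph{scalar}-weighted backward shift on $\bigoplus_Y X$ cannot be $\varepsilon$-hypercyclic for any $\varepsilon<1$ without already being hypercyclic. Write $\Lambda_n=\prod_{j=1}^n\lambda_j$. If $(\Lambda_n)_n$ is bounded, then for every $\mathbf{z}=(z_n)_n\in\bigoplus_Y X$ the first coordinate of $T^n\mathbf{z}$ has norm $\Lambda_n\|z_n\|_X$, which tends to $0$ because $\|z_n\|_X\to 0$; exactly as in the paper's inequality~\eqref{ineq: 3.2} and the remark following it, this prevents any orbit from entering $B(\lambda(x_0,0,\ldots),\delta\|\lambda(x_0,0,\ldots)\|)$ for large $|\lambda|$ and any $\delta<1$. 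If instead $\Lambda_n\to\infty$ (which your hypothesis $\liminf_n\lambda_n>1$ forces), then your own forward-shift right inverses satisfy $\|S_{n}y\|\to 0$ for \emph{every} finitely supported $y$, not merely for the enumerated $y_k$; the hypotheses of Proposition~\ref{almost criterion} are then met on the dense set $\D_2$ you describe, and $T$ satisfies the full Hypercyclicity Criterion. Either way, the obstruction vector you promise cannot coexist with $\varepsilon$-hypercyclicity.

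The paper escapes this dichotomy by making the weights \emph{operator-valued} on $X$. It fixes a bounded M-basis $(e_n,e_n^*)$ of $X$ and sets $T(x_n)_n=(S_1^{-1}x_1,S_2^{-1}x_2,\ldots)$, where each $S_j$ is a rank-two diagonal perturbation of $\alpha^{-1}\mathrm{Id}$, occasionally adjusted by a nilpotent term $N_k=e_{k^2}^*\otimes e_0$. The products $S_j\cdots S_1$ are then uniformly bounded (Proposition~\ref{bounded products2}), which freezes the $e_0$-direction and kills hypercyclicity exactly as in the scalar bounded case; yet along the complementary directions the inverse products expand like $\alpha^j$, and the nilpotent corrections let one replace each target $x^k$ by a nearby $z^k$ with $\|z^k-x^k\|\le 2\alpha^{-d}b\|x^k\|$ whose pre-image under $T^{n_k}$ is small (Proposition~\ref{sequences x and z2}). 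It is this anisotropy---contracting along $e_0$, expanding elsewhere---that produces an $\varepsilon$-hypercyclic, non-hypercyclic operator, and it has no analogue for scalar weights. Your sketch of $\D_1$, the use of Theorem~\ref{criterion}, and the boundedness computation via $1$-unconditionality and the $Y$-backward shift are all correct and reappear in the paper; what is missing is precisely this operator-valued mechanism.
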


The first part of the proof consists in the formal construction of the $\varepsilon$-hypercyclic operator, whereas in the second part we prove that our operator satisfies the statement of Theorem \ref{ehypercyclic2}.
As a remark, in the second part, step 3, we apply our $\varepsilon$-Hypercyclicity Criterion.\\

Let us start the proof of Theorem \ref{ehypercyclic2}.\\

\textbf{First part}: Let $(e_n)_n\subset X$ be a normalized bounded M-basis given by the classical result of Ovsepian and Pe\l czyński, and let $(e_n^*)_n\subset X^*$ be the associated coordinates sequence.
Let $b=\sup_n\|e^*_n\|<\infty$.
Let $\varepsilon\in (0,1)$, let $\alpha>1$ and let $d\in \N$, with $d>1$, such that $2\alpha^{-d}b< \varepsilon$. 
Let $(\Delta_k)_k\subset \N$ be a rapidly increasing sequence which will be specified later on, in Proposition \ref{sequences x and z2}.
Let $(n_k)_k$ and $(n'_k)_k$ be two increasing sequences defined by $n_0=n'_0=0$, $n_k=n'_{k-1}+d+1+\Delta_k$ and $n'_k=n_k+d+1+\Delta_k$, for all $k\geq 1$. 
It is clear that $k\leq n'_{k-1}$ for all $k\geq 2$.
For $k\in \N$ and $\sigma,\beta\in\K$, we define the diagonal operator, with respect to $(e_n)_n$, $D_{k,\sigma,\beta}$ on $X$ by:
\[D_{k,\sigma,\beta}=\sigma Id+(1-\sigma)e_0^*\otimes e_0+ (\beta-\sigma)e_{k^2}^*\otimes e_{k^2}.\]
Since $D_{k,\sigma,\beta}$ is a rank $2$ perturbation of $\sigma Id$, it is a bounded operator with norm \[\|D_{k,\sigma,\beta}\|\leq |\sigma|(1+2b)+|\beta|b+b.\]
Moreover, whenever $\sigma$ and $\beta$ are different from $0$ and $k\geq 1$, $D_{k,\sigma,\beta}^{-1}=D_{k,\sigma^{-1},\beta^{-1}}$ easily follows.
For each $k\in \N$, we define the operator $N_k:=e_{k^2}^*\otimes e_0$,
i.e. $N_k(x)=e^*_{k^2}(x)e_0$ for all $x\in X$. 
Notice that $(\|N_k\|)_k$ is uniformly bounded. 
Indeed, $\|N_k\|\leq b$, for all $k\in\N$.
Also, for each $j\geq1$, we define the operator $S_j$ on $X$ as follows. 
Let $k\in \N$ be the unique integer such that $n'_{k-1}<j\leq n'_k$, then we set:

\[S_j:=\begin{cases} D_{k,\frac{1}{\alpha},\alpha}& n'_{k-1}+1\leq j\leq n'_{k-1}+d,\\
	D_{k,\frac{1}{\alpha},1}-N_k& j= n'_{k-1}+d+1,\\
	D_{k,\frac{1}{\alpha},\frac{1}{\alpha}}& n'_{k-1}+d+2\leq j \leq n'_{k-1}+d+1+\Delta_k=n_k,\\
	D_{k,\frac{1}{\alpha},\alpha}& n_{k}+1\leq j\leq n_{k}+\Delta_k,\\
	D_{k,\frac{1}{\alpha},1}+N_k& j= n_{k}+\Delta_k+1,\\
	D_{k,\frac{1}{\alpha},\frac{1}{\alpha}}& n_{k}+\Delta_k+2\leq j \leq n_{k}+d+\Delta_k,\\
	D_{k,\alpha^{n'_k-n'_{k-1}-1},\frac{1}{\alpha}}& j=n_{k}+d+1+\Delta_k=n'_{k}.
\end{cases}\]

Notice that each $S_j$ is an upper-triangular operator on $X$ with respect to the sequence $(e_n)_n$. 
Further, observe that $(D_{k,\frac{1}{\alpha},1}\pm N_k)^{-1}=D_{k,{\alpha},1}\mp N_k$. 
The following three properties are direct from the definition of the operators $S_j$.
\begin{enumerate}
	\item[($\mathcal{Q}_0$)] $S_je_0=e_0$ for all $j\geq 1$.
	\item[($\mathcal{Q}_1$)] $S_{n'_k}\cdots S_1=Id$ for all $k\geq 1$.
	\item[($\mathcal{Q}_2$)] For $k\geq 1$, $p\notin \{ 0,k^2\}$ and $i\in \{n'_{k},\cdot\cdot\cdot,n'_{k+1}-1\}$, $S^{-1}_1\cdots S^{-1}_ie_p= \alpha^{i-n'_{k}}e_p$ holds.
\end{enumerate}
Let us now formally define the operator $T$ on $\bigoplus_YX$. Let $z=(x_n)_n\in \bigoplus_YX$, then:
\[Tz=(S^{-1}_1x_1,S^{-1}_2x_2,\cdot\cdot\cdot),\]
i.e., $T$ is a backward shift on $\bigoplus_YX$ with weights $(S^{-1}_n)_n$.

\textbf{Second part:} \textit{Step 1: $T$ is a well-defined, bounded operator on $\bigoplus_YX$.} 
\begin{prop}\label{bounded inverses2}
	Each operator $S_j$ is bounded and invertible. 
	Moreover, the sequence $(\|S_j^{-1}\|)_j$ is uniformly bounded.
\end{prop}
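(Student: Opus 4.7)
The plan is to establish the three assertions---boundedness, invertibility, and uniform boundedness of the inverses---by a direct case-split following the piecewise definition of $S_j$. In every line of the definition, $S_j$ is either of the form $D_{k,\sigma,\beta}$ or of the form $D_{k,\frac{1}{\alpha},1}\pm N_k$; boundedness is therefore immediate from the estimate $\|D_{k,\sigma,\beta}\|\leq |\sigma|(1+2b)+|\beta|b+b$ stated just above, combined with $\|N_k\|\leq b$.

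For invertibility, I would invoke the identity $D_{k,\sigma,\beta}^{-1}=D_{k,\sigma^{-1},\beta^{-1}}$ (valid whenever $\sigma,\beta\neq 0$ and $k\geq 1$, both of which hold in every case, including $j=n'_k$, where $\sigma=\alpha^{n'_k-n'_{k-1}-1}\neq 0$), and the explicit formula $(D_{k,\frac{1}{\alpha},1}\pm N_k)^{-1}=D_{k,\alpha,1}\mp N_k$ recalled just before the proposition.

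For the uniform estimate on $\|S_j^{-1}\|$, I would observe that the parameter pairs $(\sigma^{-1},\beta^{-1})$ appearing in the diagonal inverses lie in the finite set $\{(\alpha,\alpha^{-1}),(\alpha,\alpha),(\alpha^{-(n'_k-n'_{k-1}-1)},\alpha)\}$. Since $\alpha>1$ and $n'_k-n'_{k-1}-1=2d+1+2\Delta_k>0$, both entries in each pair have modulus at most $\alpha$; plugging into the diagonal norm estimate yields $\|S_j^{-1}\|\leq \alpha(1+3b)+b$ in these lines. For the two exceptional indices $j=n'_{k-1}+d+1$ and $j=n_k+\Delta_k+1$, the inverse is $D_{k,\alpha,1}\mp N_k$, whose norm is at most $\alpha(1+2b)+3b$. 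Taking the larger of these two quantities produces a uniform bound depending only on $\alpha$ and $b$.

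The only mild subtlety---which I will flag but which is not a real obstacle---is the asymmetry appearing in the anomalous line $j=n'_k$: the operator $S_{n'_k}$ itself has norm growing with $\Delta_k$, whereas the inverse carries the reciprocal scalar $\alpha^{-(2d+1+2\Delta_k)}<1$ in its $\sigma$-slot and is therefore safely bounded. This is precisely why the proposition asserts uniform boundedness of $(\|S_j^{-1}\|)_j$ but makes no analogous claim for $(\|S_j\|)_j$.
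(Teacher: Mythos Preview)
Your proof is correct and follows essentially the same case-split as the paper's own argument: diagonal cases via $D_{k,\sigma,\beta}^{-1}=D_{k,\sigma^{-1},\beta^{-1}}$ with $|\sigma^{-1}|,|\beta^{-1}|\leq\alpha$, and the two exceptional indices via $(D_{k,\frac{1}{\alpha},1}\pm N_k)^{-1}=D_{k,\alpha,1}\mp N_k$. The closing remark about why $(\|S_j\|)_j$ is \emph{not} uniformly bounded is a useful observation that the paper does not make explicit.
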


\begin{proof}
	Let $j\geq 1$. Assume that there are $k\in \N$ and $\sigma, \beta\in \R$ such that $S_j=D_{k,\sigma,\beta}$. Recalling that $S_j^{-1}= D_{k,\sigma^{-1},\beta^{-1}}$, we get that 
	$\|S^{-1}_j\|\leq |\sigma^{-1}|(1+2b)+|\beta^{-1}|b+b$. 
	Since $\sigma\in \{\alpha^{-1},\alpha^{n'_k-n'_{k-1}-1}\}$ and $\beta\in \{\alpha^{-1},1,\alpha\}$, we conclude that $\|S^{-1}_j\|\leq \alpha(1+3b)+b$, which is a constant independent of $j$.
	Otherwise, if $S_j=D_{k,\alpha^{-1},1}\pm N_k$, then  $S_j^{-1}=D_{k,\alpha,1}\mp N_k$. 
	Therefore, $\|S_j^{-1}\|\leq \|D_{k,\alpha,1}\|+b\leq \alpha(1+2b)+2b$, which is a constant independent of $j$ as well.
\end{proof}
Let $(f_n)_n$ be the $1$-unconditional basis on $Y$ used to construct the space $\bigoplus_YX$. 
Thanks to Proposition \ref{bounded inverses2}, we know that there exists a constant $C>0$ such that $\|S_n^{-1}x\|\leq C\|x\|$ for all $x\in X$ and for all $n\in \N$.
Let $K>0$ be the norm of the Backward shift operator associated to the basis $(f_n)_n$. Then, for $z=(x_n)_n\in \bigoplus_YX$ we get
\[\|Tz\| = \left\| \sum_{n=1}^\infty\|S_n^{-1}x_n\|_X f_{n-1}\right\|_Y\leq K\left\| \sum_{n=0}^\infty C\|x_n\|_X f_n\right\|_Y= KC\|z\|,\]
which implies the well definition and continuity of $T$. \\

\textit{Step 2: $T$ is not a hypercyclic operator.} 

\begin{prop}\label{bounded products2}
	The sequence $(\|S_jS_{j-1}\cdots S_1\|)_j$ is bounded by a constant $M(d)$ which depends only on $d$.
\end{prop}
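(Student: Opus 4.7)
The plan is to exploit property $(\mathcal{Q}_1)$ to reduce the problem to bounding a single–block product $S_j\cdots S_{n'_{k-1}+1}$. Given $j\geq 1$, let $k$ be the unique integer with $n'_{k-1}<j\leq n'_k$. Since $S_{n'_{k-1}}\cdots S_1=Id$, we have
\[S_j\cdots S_1=S_j\cdots S_{n'_{k-1}+1}=:P^k_j,\]
and for $j=n'_k$, $(\mathcal{Q}_1)$ gives $P^k_j=Id$ so $\|P^k_j\|=1$. It therefore suffices to bound $\|P^k_j\|$ uniformly for $n'_{k-1}<j<n'_k$ by a constant depending only on $d$.

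The next step is to record three algebraic identities that follow directly from the definitions. Since each $D_{k,\sigma,\beta}$ is diagonal in $(e_n)$ (acting as $\sigma$, except as $1$ on $e_0$ and as $\beta$ on $e_{k^2}$), one has $D_{k,\sigma_1,\beta_1}D_{k,\sigma_2,\beta_2}=D_{k,\sigma_1\sigma_2,\beta_1\beta_2}$. Directly from $N_k=e^*_{k^2}\otimes e_0$ and $D_{k,\sigma,\beta}(e_0)=e_0$ one gets $D_{k,\sigma,\beta}N_k=N_k$ and $N_kD_{k,\sigma,\beta}=\beta N_k$, and also $N_k^2=e^*_{k^2}(e_0)N_k=0$ for $k\geq 1$. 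Write each factor as $S_i=D_i+\varepsilon_iN_k$ with $D_i=D_{k,\sigma_i,\beta_i}$ and $\varepsilon_i\in\{-1,0,+1\}$, nonzero only at $i=n'_{k-1}+d+1$ (where $\varepsilon_i=-1$) and $i=n_k+\Delta_k+1$ (where $\varepsilon_i=+1$). Expanding $P^k_j$ and discarding every term containing two or more $N_k$'s, the three identities collapse each surviving term to yield
\[P^k_j=D_{k,\sigma^{(j)},\beta^{(j)}}+\sum_{\substack{n'_{k-1}<i\leq j\\ \varepsilon_i\neq 0}}\varepsilon_i\Bigl(\prod_{n'_{k-1}<m<i}\beta_m\Bigr)N_k,\]
where $\sigma^{(j)}=\prod\sigma_m$ and $\beta^{(j)}=\prod\beta_m$ over $n'_{k-1}<m\leq j$.

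The main verification is to bound each of these pieces. Every factor $S_m$ with $m<n'_k$ has $\sigma_m=1/\alpha<1$, hence $|\sigma^{(j)}|\leq 1$. Tracking the $\beta$–product across the six sub–blocks preceding the endpoint shows it equals $\alpha^d$ after the first $d$ factors, stays at $\alpha^d$ across the $\pm N_k$ step, decreases to $\alpha^{d-\Delta_k}$ through the $\Delta_k$ copies of $D_{k,1/\alpha,1/\alpha}$, then rises back to $\alpha^d$ through the $\Delta_k$ copies of $D_{k,1/\alpha,\alpha}$, and finally drops to $\alpha$ through the $d-1$ copies of $D_{k,1/\alpha,1/\alpha}$. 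In particular $|\beta^{(j)}|\leq\alpha^d$, and each of the (at most two) partial $\beta$–products appearing in the $N_k$ corrections is exactly $\alpha^d$. Using the norm estimate for $D_{k,\sigma,\beta}$ from the construction, we obtain
\[\|P^k_j\|\leq |\sigma^{(j)}|(1+2b)+b+|\beta^{(j)}|b+2\alpha^d b\leq (1+2b)+b+\alpha^d b+2\alpha^d b,\]
which depends only on $d$, since $\alpha$ and $b$ are fixed once the construction begins.

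The main obstacle is not any conceptual difficulty but the careful bookkeeping through the six sub–blocks and verifying that the very large coefficient $\alpha^{n'_k-n'_{k-1}-1}$, which appears in the $\sigma$–parameter of the terminal factor $S_{n'_k}$, never enters the partial products $P^k_j$ with $j<n'_k$. This is precisely the role of $(\mathcal{Q}_1)$: this large factor is arranged to cancel exactly the accumulated $(1/\alpha)$–factors of the previous operators, so it only manifests itself at $j=n'_k$, where the product collapses to the identity and the bound $\|P^k_j\|=1$ is trivial.
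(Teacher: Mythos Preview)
Your proof is correct and follows essentially the same approach as the paper: both reduce via $(\mathcal{Q}_1)$ to a single block $S_j\cdots S_{n'_{k-1}+1}$ and then track the $\beta$--evolution on $e_{k^2}$ through the six sub-blocks to see it never exceeds $\alpha^d$. The only difference is packaging: the paper splits $X=\spann(e_0,e_{k^2})\oplus\overline{\spann}(e_n:n\neq 0,k^2)$ via projections and bounds the product on each piece, while you obtain the equivalent information from the explicit operator formula $P^k_j=D_{k,\sigma^{(j)},\beta^{(j)}}+\gamma N_k$ using the relations $DN_k=N_k$, $N_kD=\beta N_k$, $N_k^2=0$; the resulting constants differ only slightly ($1+3b+3b\alpha^d$ versus the paper's $1+3b+2b\alpha^d$).
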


\begin{proof}
	Let $j\geq 1$ and let $k\in\N$ such that $n'_{k-1}\leq j< n'_k$. 
	Then, by property ($\mathcal{Q}_1$), $S_jS_{j-1}\cdots S_1=S_j\cdots S_{n'_{k-1}+1}$. 
	Let $X_1= \spann(e_0,e_{k^2})$ and let $X_2=\overline{\spann}(e_n:n\neq 0,k^2)$.
	Observe that $X$ is isomorphic to $X_1 \oplus X_2$.
	Indeed, let $P=e_0^*\otimes e_0+e_{k^2}^*\otimes e_{k^2}$ and let $Q= I-P$.
	Then, $P$ and $Q$ are bounded parallel projections onto $X_1$ and $X_2$ respectively.
	In fact, $\|P\|\leq 2b$. 
	Since $Id=P+Q$, we get that $\|S_j\cdots S_{n'_{k-1}+1}\|\leq \|S_j\cdots S_{n'_{k-1}+1}P\|+\|S_j\cdots S_{n'_{k-1}+1}Q\|$. 
	Thanks to $(\mathcal{Q}_1)$ and $(\mathcal{Q}_2)$, it follows that $S_j\cdots S_{n'_{k-1}+1}Q= \alpha^{-(j-n'_{k-1})} Q$. 
	Thus, $\|S_j\cdots S_{n'_{k-1}+1}Q\|\leq \|Q\|\leq 1+2b$. 
	On the other hand, regarding the operator $S_j\cdots S_{n'_{k-1}+1}P$, we can notice that \[S_j\cdots S_{n'_{k-1}+1}P=(e_1^*+\sigma_je_{k^2}^*)\otimes e_1+\beta_j e_{k^2}^*\otimes e_{k^2},~\text{where } |\sigma_j|, \beta_j\in [0, \alpha^d],\] 
	with which we conclude that $\|S_j\cdots S_{n'_{k-1}+1}P\|\leq b(1+2\alpha^{d})$, a constant independent of $j$. Finally, the proof is finished choosing $M(d)=1+3b+2b\alpha^d$.
\end{proof}
Let us check that $T$ is a non-hypercyclic operator.
Suppose that $z=(e_0,0,\cdots )$ is a cluster point of the orbit of some $w\in \bigoplus_YX$, under the action of $T$. 
Therefore, there exists $(m_k)_k\subset \N$ an increasing sequence such that $(T^{m_k}w)_k$ converges to $z$. 
Hence, the first coordinate of $T^{m_k}w$, which is $S^{-1}_1\cdots S^{-1}_{m_k}w_{m_k}$, tends to $e_0$ as $k$ tends to infinity. 
However, we have that
\begin{align}
	\|w_{m_k}-e_0\| &=\| S_{m_k}\cdots S_1(S_1^{-1}\cdots S_{m_k}^{-1}w_{m_k} -e_0)\|\label{ineq: 3.2} \\
	& \leq M(d)\|S_1^{-1}\cdots S^{-1}_{m_k}w_{m_k}-e_0\|,\notag
\end{align} 
\noindent which implies that $(w_{m_k})_k$ converges to $e_0$. 
This contradicts the fact that $w\in \bigoplus_YX$ because $(\|w_k\|)_k$ does not converge to $0$.
\begin{rem}
	The operator $T$ is not $\delta$-hypercyclic for any $\delta<1/M(d)$. 
	Indeed, let ${w\in \bigoplus_YX}$. 
	Thanks to triangle inequality and replacing the vector $z$ by the vector $\lambda z$ in \eqref{ineq: 3.2} we get that
	\begin{align*} 
		\dfrac{\|\lambda z\|}{M(d)}-\dfrac{\sup\{\|w_k\|:k\in\N\}}{M(d)}&\leq
		\dfrac{\|\lambda e_0\|}{M(d)}-\dfrac{\|w_k\|}{M(d)}\\
		&\leq \|S_1^{-1}\cdots S^{-1}_{k}w_{k}-\lambda e_0\|\\
		&\leq \|T^k w-\lambda z\|, \hspace{0.5cm} \text{for all } k\in \N.
	\end{align*}
	Let us fix $\delta<1/M(d)$. Since $\sup\{\|w_k\|:~k\in\N\}$ is finite, we can choose $\lambda\in\K$ with large modulus to show that $w$ is not a $\delta$-hypercyclic vector of $T$. 
	Finally, since $w$ is an arbitrary vector, $T$ is not a $\delta$-hypercyclic operator.
\end{rem}
\textit{Step 3: $T$ is an $\varepsilon$-hypercyclic operator.} 

\begin{prop}\label{sequences x and z2}
	There exist two sequences $(x^k)_k,~(z^k)_k\subset \bigoplus_YX$ such that 
	
	\begin{enumerate}
		\item[$(1)$] $(x^k)_k$ is dense in $\bigoplus_YX$,
		\item[$(2)$] $\|z^k-x^k\| \leq 2\alpha^{-d}b \|x^k\|,~\text{for all } k\geq 2$, and
		\item[$(3)$] $\|S_{n_k+j}\cdots S_{j+1}z^k_j\|\leq 2^{-k}$ for every $k\geq 2$ and for every $j=0,\cdots ,k-1$.
		\item[$(4)$] For each $k$, there is $N_k\in N$ such that $z^k_j=0$ for all $j\geq N_k$.
	\end{enumerate}
\end{prop}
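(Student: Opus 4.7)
The plan is to construct $(x^k)_k$ as a dense sequence of finitely supported vectors whose $X$-coordinates avoid $e_{k^2}$, and then to perturb $x^k$ in the single direction $e_{k^2}$ so that the forward product $P_j^k:=S_{n_k+j}\cdots S_{j+1}$ annihilates the $e_0$-coefficient of $z^k_j$ for each $j<k$.

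For the construction of $(x^k)_k$, I start from a countable dense subset $\{w_l:l\in\N\}\subset\bigoplus_Y X$ of vectors with finite support in the $(f_n)$-basis of $Y$ whose every $X$-coordinate is a finite rational linear combination of $(e_n)_n$; let $M_l$ denote the largest index of $(e_n)_n$ appearing in any $w_{l,j}$. Reindexing by setting $x^k:=w_{l(k)}$ with $k^2>M_{l(k)}$ and arranging that every $w_l$ is used infinitely often yields a dense sequence satisfying $e_{k^2}^*(x^k_j)=0$ for all $j$ and all $k\geq 2$; moreover each $x^k$ inherits finite $(f_n)$-support, which takes care of condition~(4).

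For the analysis of $P_j^k$ I fix $k\geq 2$ and $0\leq j\leq k-1$. Since $k\leq n'_{k-1}$, the unique index $\mu_0=\mu_0(j)$ with $n'_{\mu_0-1}<j+1\leq n'_{\mu_0}$ satisfies $\mu_0\leq k-1$. Property $(\mathcal{Q}_1)$ collapses each fully contained stage $\mu\in\{\mu_0+1,\ldots,k-1\}$ to the identity, reducing $P_j^k$ to a partial-stage-$k$ product composed with a partial-stage-$\mu_0$ product. Restricted to $\spann(e_0,e_{k^2})$ and read in the basis $\{e_0,e_{k^2}\}$, $P_j^k$ is upper triangular of the form $\bigl(\begin{smallmatrix}1&A_j^k\\0&B_j^k\end{smallmatrix}\bigr)$; the only off-diagonal contribution in the range comes from the $-N_k$ at position $n'_{k-1}+d+1$, and a direct computation gives $A_j^k=-\alpha^{d+j-n'_{\mu_0-1}}$ and $B_j^k/A_j^k=-\alpha^{j-\Delta_k}$, so that $|A_j^k|\geq\alpha^d$ and $|B_j^k/A_j^k|\leq\alpha^{k-1-\Delta_k}$. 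Analogously, on $\spann(e_0,e_{\mu_0^2})$ the vector $P_j^k(e_{\mu_0^2})$ has $e_0$-coefficient $\nu_j^k$ satisfying $|\nu_j^k/A_j^k|\leq\alpha^{-d}$ (by a segment-by-segment matrix computation across the sub-segments of stage $\mu_0$), and for every other basis vector $e_n$ the image $P_j^k(e_n)$ is a scalar multiple of $e_n$ of modulus $\leq\alpha^{-d-\Delta_k}$.

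Finally, I set
\[\lambda_j^k:=-\frac{e_0^*(x^k_j)+e_{\mu_0(j)^2}^*(x^k_j)\,\nu_j^k}{A_j^k},\qquad z^k_j:=x^k_j+\lambda_j^k\,e_{k^2}\quad(0\leq j<k),\]
and $z^k_j:=x^k_j$ for $j\geq k$. The bounds $|\nu_j^k/A_j^k|\leq\alpha^{-d}$, $|1/A_j^k|\leq\alpha^{-d}$ and $\|e_n^*\|\leq b$ give $|\lambda_j^k|\leq 2\alpha^{-d}b\|x^k_j\|$, whence $1$-unconditionality yields $\|z^k-x^k\|\leq 2\alpha^{-d}b\|x^k\|$, proving~(2). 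The choice of $\lambda_j^k$ kills the $e_0$-coefficient of $P_j^k(z^k_j)$ by design, and the remaining coefficients---lying in $e_{k^2}$, $e_{\mu_0^2}$, and the other basis vectors of the finite support of $x^k_j$---are each bounded by $\|x^k_j\|$ times either $|B_j^k/A_j^k|\leq\alpha^{k-1-\Delta_k}$ or $\alpha^{-d-\Delta_k}$, so $\|P_j^k z^k_j\|$ is at most a constant depending on $x^k$, $b$, $d$, $k$ times $\alpha^{-\Delta_k+O(k)}$. Choosing $(\Delta_k)_k$ to grow fast enough, depending inductively on $(x^i)_{i\leq k}$, forces this to be $\leq 2^{-k}$, which is~(3); (1) and (4) are built into the construction. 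The main technical obstacle is the uniform-in-$j$ bound $|\nu_j^k/A_j^k|\leq\alpha^{-d}$, which requires tracking the growth of the $(1,2)$-entry of the partial product on $\spann(e_0,e_{\mu_0^2})$ across the sub-segments of stage $\mu_0$ and comparing it to the fast growth of $|A_j^k|$.
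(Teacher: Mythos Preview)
Your proof is correct and follows the same approach as the paper: perturb each coordinate $x^k_j$ in the single direction $e_{k^2}$ so as to cancel the $e_0$-component of $S_{n_k+j}\cdots S_{j+1}z^k_j$, and then use the growth of $\Delta_k$ to make the remaining terms small. The only difference is cosmetic---you define the perturbation implicitly via the matrix entries $A_j^k$, $\nu_j^k$ and defer the bound $|\nu_j^k/A_j^k|\le\alpha^{-d}$ to a segment-by-segment computation, whereas the paper writes the coefficient $c_j^k$ explicitly in four cases according to where $j$ sits inside stage $\mu_0$, from which $\|v_j^k\|\le 2\alpha^{-d}b\|x^k_j\|$ can be read off directly.
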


\begin{proof}
	Let $(x^k)_k\subset \bigoplus_YX\setminus \{0\}$ be a sequence which satisfies the following two properties:
	\begin{enumerate}
		\item $\bigoplus_YX=\overline{\{x^k:k\in\N\}}$.
		\item For each $k\in\N$, $x^k=(x^k_0,\cdots ,x^k_{k-1},0,\cdots )$, where each $x^k_j\in \spann(e_n:n\leq k-1)$.
	\end{enumerate}
	Let $k\geq 2$. 
	In order to define $z^k$, let us fix $j<k$ and $l\in\N$ such that $n'_{l-1}\leq j<n'_{l}$. 
	We know that $l<k$.
	Let us define $v_j^k\in X$ by:
	\[
	\alpha^{d+j-n'_{l-1}}v_j^k=\begin{cases}
		e_0^*(x^k_j)e_{k^2}& \text{if }   n'_{l-1}\leq j\leq n'_{l-1}+d,\\
		(e_0^*+\alpha^{j-(n'_{l-1}+d+1)}e^*_{l^2})(x^k_j)e_{k^2} &\text{if } n'_{l-1}+d+1\leq j\leq n'_{l-1}+d+1+\Delta_l=n_l\\
		(e_0^*+\alpha^{\Delta_l-(j-n_{l})}e^*_{l^2})(x^k_j)e_{k^2} &\text{if } n_l+1\leq j\leq n_l+\Delta_l,\\
		e_0^*(x^k_j)e_{k^2} &\text{if } j\geq n_l+\Delta_l+1.\\
	\end{cases}
	\]
	Set $v^k=(v^k_0,v^k_1,\cdots ,v^k_{k-1},0,\cdots )$ and $z^k=x^k+v^k$. 
	Observe that $\|v^k_j\| \leq 2\alpha^{-d}b\|x^k_j\| $, for all $j\in\{0,1,\cdots ,k-1\}$. 
	Since the space $\bigoplus_YX$ is constructed with a $1$-unconditional basis of $Y$, we conclude that $\|z^k-x^k\|\leq 2\alpha^{-d}b\|x^k\|$.
	So, it only remains to prove property $(3)$.
	Let $k\geq 2$ and $j\in \{0,\cdots ,k-1\}$. 
	For the sake of brevity, let us set $c_j^k\in \K$ by $v_j^k=c_j^ke_{k^2}$.
	Let $l\in\N$ such that $n'_{l-1}\leq j<n'_{l}$. Then, we get
	\begin{align*}
		S_{n_k+j}\cdots S_{j+1}z^k_j &= S_{n_k+j}\cdots S_{1}(S_{1}^{-1}\cdots S_{j}^{-1}z^k_j)\\
		&= S_{n_k+j}\cdots S_{n'_{k-1}+1}(S_{n'_{l-1}+1}^{-1}\cdots S_{j}^{-1}(x_j^k +v_j^k))\\
		&=S_{n_k+j}\cdots S_{n'_{k-1}+1}(S_{n'_{l-1}+1}^{-1}\cdots S_{j}^{-1}(x_j^k)+ \alpha^{j-n'_{l-1}}v_j^k),\\
		&=S_{n_k+j}\cdots S_{n'_{k-1}+1}(S_{n'_{l-1}+1}^{-1}\cdots S_{j}^{-1}(x_j^k))+\alpha^{j-n'_{l-1}}(-\alpha^{d}c^k_je_0+\alpha^{d-\Delta_k+j}c_j^ke_{k^2}).
	\end{align*}
	where the second equality comes from ($\mathcal{Q}_1$), the third one is due to ($\mathcal{Q}_2$) and the fact that $l<k$ and in the last line we have assumed that $\Delta_k$ is bigger than $k$. 
	To continue, let us set the vector $h:=S_{n'_{l-1}+1}^{-1}\cdots S_{j}^{-1}(x_j^k)$ and the operators $P=e^*_0\otimes e_0$ and $Q=I-P$. 
	Then, since the operators $\{S_j:j\geq 1\}$ are upper-triangular with respect to the $M$-basis $(e_n)_n$, we conclude that $Qh\in\spann\{e_n:0<n<k\}$. Thus, we get
	\begin{align*}
		S_{n_k+j}\cdots S_{j+1}z^k_j &=S_{n_k+j}\cdots S_{n'_{k-1}+1}(Ph+Qh)+\alpha^{j-n'_{l-1}}(-\alpha^{d}c^k_je_0+\alpha^{d-\Delta_k+j}c_j^ke_{k^2}),\\
		&=[Ph-\alpha^{d+j-n'_{l-1}}c^k_je_0]+\alpha^{-(n_k+j-n'_{k-1})}Qh+\alpha^{j-n'_{l-1}+d-\Delta_k+j}c_j^ke_{k^2},
	\end{align*}
	where in the second line we have used property ($\mathcal{Q}_0$) and that the operator $S_j$ restricted to $\spann(e_n:0<n<k)$ is equal to $\alpha^{-1} Id$ for all $j \in [n'_{k-1}+1, n'_k-1]$. Since $n_k+j-n'_{k-1}=j+\Delta_k+d+1$ and $\|Qh\|$ does not depend on $\Delta_k$, because $l<k$, the third term in the last expression tends to $0$ as $\Delta_k$ tends to infinity.
	Also, since $l<k$ and $|c_j^k|=\|v_j^k\| \leq 2\alpha^{-d}b\|x^k_j\|$ does not depend on $\Delta_k$, the fourth term in the last expression tends to $0$ as $\Delta_k$ tends to infinity.
	On the other hand, the coefficients $c_j^k$ were chosen to cancel the expression enclosed in square brackets. 
	Finally, if we choose $\Delta_k$ large enough (with $\Delta_k>k$), we can ensure that $\|S_{n_k+j}\cdots S_{j+1}z^k_j\|\leq 2^{-k}$.
\end{proof}

\begin{proof}[Proof of Theorem \ref{ehypercyclic2}]
	
	We already know that $T$ is a bounded non-hypercyclic operator on $\bigoplus_YX$. 
	Let us show that $T$ is $\varepsilon$-hypercyclic, using the $\varepsilon$-Hypercyclicity Criterion, Theorem \ref{criterion}. 
	Let $(x^k)_k$ and $(z^k)_k$ be sequences given by Proposition \ref{sequences x and z2}.
	Let us set 
	\[\D_1:=\{(y_i)_i\in \bigoplus_YX:~\exists N\in \N, y_i=0,~\forall i\geq N\},\]
	\noindent which is in $\bigoplus_YX$. 
	Let $\D_2:=\{z^k\in \bigoplus_YX:~k\geq 2\}$. 
	Let $w\in \bigoplus_YX$ be a vector different from $0$ and let $(x^{m_k})_k$ be a subsequence of $(x^k)_k$ which converges to $w$.
	Let $\rho>2\alpha^{-d}b$.
	We claim that, for $k$ large enough, $z^{m_k}\in B(w,\rho\|w\|)$.
	In fact, applying Proposition \ref{sequences x and z2} $(2)$ we obtain that
	\begin{align*}
		\|w-z^{m_k}\|\leq \|w-x^{m_k}\|+\|x^{m_k}-z^{m_k}\|\leq \|w-x^{m_k}\| + 2\alpha^{-d}b\|x^{m_k}\|,
	\end{align*}
	Since $\rho>2\alpha^{-d}b$ and $(x^{m_k})_k$ converges to $w$, the claim is proved. 
	Thus, there are infinitely many $k\in\N$ such that $z^k\in B(w,\rho\|w\|)$.
	Let $(n(k))=n_k$ be the sequence constructed in the first part. 
	Now, we check the three hypotheses of the $\varepsilon$-Hypercyclicity Criterion. 
	Let us define the map $U:\D_1\to\D_1$ as the formal right inverse of $T$. i.e. $U$ is defined by
	\[U(y_i)_i=T^{-1}(y_i)_i=(0, S_1y_0, S_2y_1,\cdots ),~\forall (y_i)_i\in \D_1.\]
	Let $U_{n(k)}:=T^{-n(k)}$. 
	With this, hypothesis $(1)$ and $(3)$ are straightforward. 
	Indeed, let $y\in \D_1$. Since $(n(k))_k$ tends to infinity and $T$ is a backward shift, we have that $T^{n(k)}y=0$ for $k$ large enough. 
	Hypothesis $(3)$ follows from the formula $T^{n(k)}U_{n(k)}= Id$, which is valid in $\D_2$ thanks to Proposition~\eqref{sequences x and z2}~$(d)$.
	Finally, hypothesis $(2)$ is implied by Proposition~\ref{sequences x and z2}~$(c)$. 
	Indeed, let $k\geq 2$.
	By triangle inequality we have that
	\begin{align*}
		\|U_{n(k)}z^k\|&\leq \sum_{j=0}^{k-1}\|U_{n(k)}(0,\cdots ,0,z^k_j,0,\cdots )\|\leq k2^{-k},
	\end{align*}
	\noindent expression which tends to $0$ as $k$ tends to $\infty$. 
	Hence, $T$ is a $\rho'$-hypercyclic operator for any $\rho'>\rho$. 
	Finally, since $\rho$ can be chosen arbitrary close to $2\alpha^{-d}b$, and then $\rho<\varepsilon$, we finally get that $T$ is an $\varepsilon$-hypercyclic operator.
\end{proof}

\begin{rem}\label{sequence nk}
    Notice that the sequence $(\Delta_k)_k$ used in the construction of the $\varepsilon$-hypercyclic operator $T$ can be replaced by any sequence of integers $(\Delta'_k)_k$ so that $\Delta_k\leq \Delta'_k$, for all $k\in\N$.
	Observe that the operator constructed in Theorem \ref{ehypercyclic2} satisfies the $\varepsilon$-Hypercyclicity Criterion associated to the sequence $(n_k)_k$, where \[n_k=(2k-1)(d+1)+\Delta_k+2\sum_{j=1}^{k-1}\Delta_j , ~\forall k\geq 1.\]
	
\end{rem}

In order to extend further our result we recall that there exist hypercyclic operators in each separable Banach space, see \cite{A} and \cite{Be}. Further, in \cite{LM}, Le\'on-Saavedra and Montes-Rodr\'iguez showed that the operator constructed in \cite{Be} satisfy the Hypercyclicity Criterion.

\begin{thm}\label{Theorem product spaces}
	Let $X$ be a separable Banach space. Assume that $X$ admits an infinite dimensional complemented subspace $V$ of the form $V=\bigoplus_YZ$, where $Y$ and $Z$ satisfy the assumptions of Theorem \ref{ehypercyclic2}. 
	Then $X$ admits an $\varepsilon$-hypercyclic operator which is not hypercyclic.
\end{thm}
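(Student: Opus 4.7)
The plan is to decompose $X=V\oplus W$ and build the desired operator as a direct sum $T\oplus R$, with $T$ on $V$ provided by Theorem \ref{ehypercyclic2} and $R$ on $W$ a hypercyclic operator satisfying the Hypercyclicity Criterion. Proposition \ref{product space} will then give $\delta$-hypercyclicity of $T\oplus R$ on $V\oplus W$ for every $\delta>\varepsilon$, while non-hypercyclicity is automatic: any putative hypercyclic vector $(x,y)$ of $T\oplus R$ would project to a hypercyclic vector $x$ of $T$, contradicting Theorem \ref{ehypercyclic2}. Two preparatory issues must be addressed: the complement $W$ must be infinite-dimensional, and the two criteria must share the same sequence of integers.

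For the first issue, if $W=\{0\}$ we simply use $T$ on $X=V$ and are done; if $W$ is already infinite-dimensional we proceed directly. Otherwise we split $V$ itself. Partition $\N$ into two infinite subsets $A_0,A_1$ (for instance, even and odd integers) and set $Y_i:=\overline{\spann}(f_n:n\in A_i)$ for $i=0,1$. By $1$-unconditionality each $Y_i$ is $1$-complemented in $Y$ and inherits the $1$-unconditional basis $(f_n)_{n\in A_i}$; in the even/odd case the backward shift associated to this basis is the restriction of $B^2$ to $Y_i$ (where $B$ is the backward shift on $Y$), hence bounded. Therefore each $V_i:=\bigoplus_{Y_i}Z$ satisfies the hypotheses of Theorem \ref{ehypercyclic2} and is infinite-dimensional, while $V=V_0\oplus V_1$. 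Replacing $V$ by $V_0$ and $W$ by $V_1\oplus W$, we may assume from now on that $W$ is infinite-dimensional while the ``inner'' subspace still has the required form.

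For the second issue, by \cite{A, Be} combined with \cite{LM} the space $W$ admits a hypercyclic operator $R$ satisfying the Hypercyclicity Criterion along some increasing sequence $(m_k)_k$. I would then build $T$ on $V$ via Theorem \ref{ehypercyclic2}: thanks to Remark \ref{sequence nk}, the defining sequence $(\Delta_k)_k$ can be enlarged inductively so that the resulting sequence $(n_k)_k$ associated to the $\varepsilon$-Hypercyclicity Criterion for $T$ becomes a subsequence of $(m_k)_k$. As noted just before the proof of Proposition \ref{product space}, $R$ then also satisfies the Hypercyclicity Criterion along $(n_k)_k$, so both criteria are synchronized on the same sequence. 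Proposition \ref{product space} now yields that $T\oplus R$ is $\delta$-hypercyclic on $V\oplus W$ equipped with the maximum norm, for every $\delta>\varepsilon$; since the maximum norm and the ambient norm on $X$ are equivalent, a preliminary rescaling of $\varepsilon$ in Theorem \ref{ehypercyclic2} absorbs the equivalence constants and produces the desired $\varepsilon$-hypercyclic, non-hypercyclic operator on $X$. The main obstacle throughout is the simultaneous synchronization of the two criteria's integer sequences, which is precisely what Remark \ref{sequence nk} is designed to enable; the possibly finite-dimensional complement is a minor subtlety neutralized by the initial splitting of $V$.
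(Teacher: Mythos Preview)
Your proposal is correct and follows essentially the same approach as the paper: build the operator as a direct sum on $V\oplus W$, synchronize the two criteria via Remark \ref{sequence nk}, and apply Proposition \ref{product space}. The only cosmetic differences are that the paper handles the finite-dimensional complement by peeling off a single copy of $Z$ (replacing $Y$ by $\overline{\spann}(f_n:n\geq 1)$ and $W$ by $W\oplus Z$) rather than your even/odd split, and that you make the norm-equivalence rescaling explicit whereas the paper leaves it implicit.
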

Observe that Theorem \ref{product space corollary} is just Theorem \ref{Theorem product spaces} whenever the space $V$ is either $c_0(\N)$ or $\ell^p(\N)$, for $p\in[1,\infty)$.
\begin{proof}
	Let $\varepsilon>0$.
Let $V=\bigoplus _YZ$ be the complemented subspace given by the statement.
Let $W$ be a topological complement of $V$ on $X$. 
Without loss of generality, we assume that $W$ is infinite dimensional. 
Otherwise, considering $(f_n)_n$ as the basis of $Y$ used in the construction of $V$, we replace $Y$ for $\overline{\spann}(f_n:n\geq 1)$ and $W$ for $W\oplus Z$, which is infinite dimensional.
Let us consider $T$ be any bounded hypercyclic operator on $W$ that satisfies the Hypercyclicity Criterion. 
Let $(n_k)_k$ be a sequence of integers provided by the Hypercyclicity Criterion for $T$. 
By Theorem \ref{ehypercyclic2}, there is an $\varepsilon$-hypercyclic operator $S$ on $V$ which is not hypercyclic.
Moreover, by Remark \ref{sequence nk}, we can chose $S$ such that satisfies the $\varepsilon$-Hypercyclicity Criterion for a sequence $(m_k)_k$ of the form \[m_k=2k(d+1)+2\sum_{j=1}^{k-1}\Delta_j +\Delta_k, ~\hspace{0.5cm} \text{for all } k\geq 1.\] 
Since, for each $j\in\N$, we can chose $\Delta_j$ as large as we want, we can (and shall) assume that the sequence $(m_k)_k$ is a subsequence of $(n_k)_k$. 
Therefore, since $T$ also satisfies the Hypercyclicity Criterion for the sequence $(m_k)_k$, we can apply Proposition \ref{product space} to deduce that $S\oplus T$ is $\delta$-hypercyclic on $V\oplus W$, for all $\delta>\varepsilon$.
However, $S\oplus T$ is not hypercyclic.
Indeed, notice that $V$ and $W$ are complemented spaces and both are invariant for $S\oplus T$. 
If $S\oplus T$ were hypercyclic, then both restriction, $S\oplus T|_V$ and $S\oplus T|_W$ would be hypercyclic as well. 
However, $S\oplus T|_V=S$, which is not hypercyclic.
\end{proof}

\section{A remark on the $\varepsilon$-Hypercyclicity Criterion}

One of the main differences between the proposed $\varepsilon$-Hypercyclicity Criterion and the Hypercyclicity Criterion is the necessity of an enumeration of the set $\mathcal{D}_2$. In fact, in the literature we can find several criteria, having a structure similar to the Hypercyclicity Criterion, in which the corresponding set $\mathcal{D}_2$ is not necessarily enumerated. For instance, regarding the criteria for supercyclicity, cyclicity or frequent hypercyciclity stated in \cite[Theorem 1.14, Exercise 1.4 and Theorem 6.18]{BM} respectively, the conditions on every point of $\mathcal{D}_2$ is identical. However, the next result says that we cannot naively avoid this technicality.

\begin{prop}\label{almost criterion}
	Let $X$ be an infinite dimensional separable Banach space, let $T$ be a bounded operator on $X$ and let $\varepsilon\in(0,1)$. Let $\D_1$ be a dense set on $X$. Let $\D_2$ be a subset of $X$ such that $\D_2\cap B(x,\varepsilon\|x\|)$ is nonempty for all $x\in X\setminus \{0\}$. 
	Let $(n(k))_k\subset \N$ be an increasing sequence and let $S_{n(k)}:\D_2\to X$ be a sequence of maps such that:
	\begin{enumerate}
		\item $\lim_{k\to\infty}\|T^{n(k)}x\|= 0$ for all $x\in \D_1$,
		\item $ \lim_{k\to\infty}\|S_{n(k)}y\|=0$, for all $y\in \D_2$,
		\item $ \lim_{k\to\infty}\|T^{n(k)}S_{n(k)}y-y\| = 0$ for all $y\in \D_2$.
	\end{enumerate}
	Then, $T$ satisfies the Hypercyclicity criterion.
\end{prop}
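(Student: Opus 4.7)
The plan is to enlarge $\D_2$ to a genuinely dense subset $\tilde{\D}_2$ of $X$ on which the maps $S_{n(k)}$ extend additively, and then invoke the classical Hypercyclicity Criterion (Theorem~\ref{hypercyclic criterion}) directly. The only hypothesis of Theorem~\ref{hypercyclic criterion} that is not immediate from the assumptions is the density of $\D_2$; I use the ``cone approximation'' property to repair this by a telescoping argument.

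First, I claim that the set $\tilde{\D}_2$ of all finite sums of elements of $\D_2$ is dense in $X$. Fix $x \in X\setminus\{0\}$ and set $x_0 = x$. By hypothesis, there is $y_0 \in \D_2 \cap B(x_0, \varepsilon\|x_0\|)$; let $x_1 := x_0 - y_0$, so that $\|x_1\| \leq \varepsilon \|x_0\|$. Iterating while $x_n \neq 0$, I pick $y_n \in \D_2 \cap B(x_n, \varepsilon\|x_n\|)$ and set $x_{n+1} := x_n - y_n$, getting $\|x_{n+1}\| \leq \varepsilon^{n+1}\|x\|$. Since $\varepsilon < 1$, the partial sums $\sum_{j=0}^{N} y_j = x - x_{N+1}$ converge to $x$, and each of them lies in $\tilde{\D}_2$. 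This gives $X\setminus\{0\} \subset \overline{\tilde{\D}_2}$, and $0$ is trivially a limit point (the iteration starting from any small nonzero vector produces arbitrarily small elements of $\tilde{\D}_2$).

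Next, for each $z \in \tilde{\D}_2$ I fix once and for all a single representation $z = y_1^z + \cdots + y_{m_z}^z$ with $y_i^z \in \D_2$, and define
\[
\tilde{S}_{n(k)}(z) := \sum_{i=1}^{m_z} S_{n(k)} y_i^z.
\]
Nothing in the statement of the Hypercyclicity Criterion requires these maps to be linear, so this non-canonical extension is admissible. Condition (1) of Theorem~\ref{hypercyclic criterion} is exactly hypothesis (1). For fixed $z \in \tilde{\D}_2$, applying hypotheses (2) and (3) to each of the finitely many $y_i^z$ and using the triangle inequality yields $\|\tilde{S}_{n(k)} z\| \leq \sum_{i} \|S_{n(k)} y_i^z\| \to 0$ and $\|T^{n(k)} \tilde{S}_{n(k)} z - z\| \leq \sum_{i} \|T^{n(k)} S_{n(k)} y_i^z - y_i^z\| \to 0$, which are conditions (2) and (3). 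Hence $T$ satisfies the Hypercyclicity Criterion for the triple $(\D_1, \tilde{\D}_2, (\tilde{S}_{n(k)})_k)$.

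There is no serious obstacle here; the only subtlety is that the additive extension of $S_{n(k)}$ is not canonical, which is precisely why one must pre-select a representation for each element of $\tilde{\D}_2$. This minor point is in fact the whole content of the proposition: it shows that any na\"ive attempt to state an $\varepsilon$-hypercyclicity criterion with conditions (2) and (3) required \emph{for every} $y \in \D_2$ (rather than along an enumeration, as in Theorem~\ref{criterion}) collapses to the ordinary Hypercyclicity Criterion, so the enumeration in Theorem~\ref{criterion} is genuinely necessary.
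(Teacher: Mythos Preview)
Your proof is correct and takes a genuinely different, more elementary route than the paper's.

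The paper does not enlarge $\D_2$ at all. Instead it argues indirectly: it reuses the construction from Proposition~\ref{product space} to build, from the hypotheses, an $\varepsilon$-hypercyclic vector $(z_1,z_2)$ for the direct sum $T\oplus T$ on $X\times X$; then Proposition~\ref{ehyp c} gives that $T\oplus T$ is cyclic, a result of Grivaux upgrades this to $T\oplus T$ hypercyclic, and finally the B\`es--Peris theorem yields that $T$ satisfies the Hypercyclicity Criterion. Your argument bypasses all three external results by observing that the cone-approximation hypothesis on $\D_2$ makes the additive span $\tilde{\D}_2$ dense via a geometric-series telescoping, and that the (nonlinear) additive extension of the $S_{n(k)}$ along fixed representations preserves conditions~(2) and~(3) by linearity of $T$ and the triangle inequality. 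This is self-contained, shorter, and makes the collapse to the Hypercyclicity Criterion completely transparent. What the paper's route buys instead is internal coherence: it exercises and illustrates the product-space machinery of Proposition~\ref{product space} and the auxiliary Proposition~\ref{ehyp c}, which are the paper's own tools, rather than introducing a new telescoping idea.
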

Before proceeding with the proof, we recall that an operator $T$ on $X$ is cyclic if there exists a vector $x\in X$ such that $\spann(\textup{Orb}_T(x))$ is dense in $X$.
\begin{proof} 
	The proof follows by showing that $T\oplus T$ is a cyclic operator on $X\times X$. 
	Indeed, if $T\oplus T$ is cyclic, then $T\oplus T$ is hypercyclic by \cite[Proposition 4.1]{Gr} and, finally, $T$ satisfies the Hypercyclicity criterion by \cite[Theorem 2.3]{BP}. 
	Since the argument is analogous to the one presented in the proof of Proposition \ref{product space}, we present only a sketch of the proof. 
	First, we fix a sequence $(v_k)\subset X\setminus \{0\}$ which is dense in $X$. Let us consider a countable partition of $\N$ given by infinite countable set, namely, $\N=\bigcup_j \N_j$. 
	By Remark \ref{remark limsup}, we can construct a vector $z_1\in X$ and an increasing sequence $(k(i))_i\subset \N$ such that:
	\[\limsup_{i\in\N_j,~i\to\infty}\|T^{n(k(i))}z_1-v_j\|\leq \varepsilon\|v_j\|,~ \forall~j\in\N.\]
	Now, we construct a vector $z_2$ adapted to $z_1$ in the following sense:
	\[\liminf_{i\in\N_j,~i\to\infty}\|T^{n(k(i))}z_2-x\|\leq \varepsilon\|x\|,~ \forall x\in X\setminus\{0\}, \forall~j\in\N.\]
	Finally, $(z_1,z_2)$ is an $\varepsilon$-hypercyclic vector of $T\oplus T$ on $X\times X$ whenever this space is endowed with the norm of the maximum. 
	Hence, by Proposition \ref{ehyp c} $(z_1,z_2)$ is a cyclic vector of $T\oplus T$, and so $T$ satisfies the Hypercyclicity Criterion.
\end{proof}

\section{Elementary results}

\begin{prop}\label{isomorphism}
	Let $(X_1,\|\cdot\|_1)$ and $(X_2,\|\cdot\|_2)$ be two isomorphic Banach spaces. Assume that, for each $\varepsilon>0$, $X_1$ admits an $\varepsilon$-hypercyclic operator which is not hypercyclic. 
	Then $X_2$ enjoys the same property.
\end{prop}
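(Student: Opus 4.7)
The plan is to transport an appropriate $\varepsilon$-hypercyclic operator from $X_1$ to $X_2$ via conjugation by a topological isomorphism, paying careful attention to how the norm distortion of the isomorphism scales the parameter $\varepsilon$.

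Let $J: X_1 \to X_2$ be a topological isomorphism and set $C := \|J\|\cdot\|J^{-1}\|\geq 1$. Given a target $\varepsilon>0$, the idea is to choose $\varepsilon' := \varepsilon/C$ and, using the hypothesis on $X_1$, pick an $\varepsilon'$-hypercyclic operator $T$ on $X_1$ that is not hypercyclic. The candidate operator on $X_2$ is the conjugate $S := J T J^{-1}$, which is bounded since it is a composition of bounded maps, and satisfies $S^n = J T^n J^{-1}$ for every $n\in\N$.

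For the non-hypercyclicity of $S$, I would argue by contradiction: if some $w\in X_2$ had dense orbit under $S$, then for any $u\in X_1$ and $\eta>0$, density of $\{S^nw\}$ near $Ju$ would translate via $J^{-1}$ (which is bounded) into density of $\{T^n(J^{-1}w)\}$ near $u$, contradicting non-hypercyclicity of $T$. Thus $S$ inherits non-hypercyclicity from $T$.

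For the $\varepsilon$-hypercyclicity of $S$, let $x\in X_1$ be an $\varepsilon'$-hypercyclic vector of $T$ and take any $w\in X_2\setminus\{0\}$. Setting $y := J^{-1}w\in X_1\setminus\{0\}$, by hypothesis there is $n\in\N$ with $\|T^n x - y\|_1 \leq \varepsilon'\|y\|_1$. Applying $J$ and using its operator norm gives
\[
\|S^n(Jx) - w\|_2 = \|J(T^n x - y)\|_2 \leq \|J\|\cdot \varepsilon'\|J^{-1}w\|_1 \leq \|J\|\cdot\|J^{-1}\|\cdot \varepsilon'\|w\|_2 = \varepsilon\|w\|_2,
\]
so $Jx$ is an $\varepsilon$-hypercyclic vector for $S$. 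This completes the proof.

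The argument is essentially routine; the only substantive point is realizing that $\varepsilon$-hypercyclicity is \emph{not} preserved by conjugation in general but only up to the multiplicative factor $C=\|J\|\cdot\|J^{-1}\|$, and hence one must shrink the starting parameter to $\varepsilon/C$. This is harmless because the hypothesis provides $\varepsilon'$-hypercyclic non-hypercyclic operators on $X_1$ for \emph{every} $\varepsilon'>0$.
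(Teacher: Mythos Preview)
Your proof is correct and follows essentially the same route as the paper: conjugate by the isomorphism, observe that non-hypercyclicity is preserved under conjugacy, and compute that $\varepsilon'$-hypercyclicity becomes $\|J\|\|J^{-1}\|\varepsilon'$-hypercyclicity. The only cosmetic difference is that you explicitly pre-shrink the parameter to $\varepsilon/C$, whereas the paper proves the scaling $\varepsilon\mapsto \|T\|\|T^{-1}\|\varepsilon$ and leaves the shrinking implicit.
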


\begin{proof}
	Let $T\in\mathcal{L}(X_1,X_2)$ be an isomorphism between $X_1$ and $X_2$. 
	Let $\varepsilon\in (0,1)$ and let $S$ be an $\varepsilon$-hypercyclic operator on $X_1$ which is not hypercyclic. 
	We claim that $TST^{-1}$ is a $\|T\|\|T^{-1}\|\varepsilon$-hypercyclic but not hypercyclic operator on $X_2$. 
	Indeed, let $x\in X_1$ be an $\varepsilon$-hypercyclic vector of $S$. 
	Let $y\in X_2$ and $n\in \N$ be an integer such that $\|S^nx-T^{-1}y\|_1 \leq \varepsilon\|T^{-1}y\|_1$.  Now, we can observer that
	\[\|TS^nT^{-1}(Tx)-y\|_2\leq \|T\|\|S^nx-T^{-1}y\|_1 \leq \|T\|\|T^{-1}\|\varepsilon\|y\|_2,\]
	concluding that $Tx$ is an $\|T\|\|T^{-1}\|\varepsilon$-hypercyclic vector of $TST^{-1}$. Finally, $TST^{-1}$ cannot be hypercyclic since this property is preserved under conjugacy.
\end{proof}

\begin{prop}\label{ehyp c}
	Let $T$ be an $\varepsilon$-hypercyclic operator on $X$, with $\varepsilon\in (0,1)$. Then $T$ is a cyclic operator.
\end{prop}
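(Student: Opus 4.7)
The plan is to show that any $\varepsilon$-hypercyclic vector $x$ of $T$ is actually a cyclic vector, by a Hahn-Banach argument exploiting the condition $\varepsilon < 1$. Let $E := \overline{\spann}(\textup{Orb}_T(x))$, which is a closed subspace of $X$ containing $x$. The goal is to show $E=X$; I will argue by contradiction, assuming $E \subsetneq X$.

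If $E \neq X$, then by the Hahn-Banach theorem there exists $x^* \in X^*$ with $x^* \neq 0$ and $x^*|_E = 0$. In particular, $x^*(T^n x) = 0$ for every $n \in \N$. Now fix any $y \in X \setminus \{0\}$. By $\varepsilon$-hypercyclicity of $x$, there exists $n \in \N$ such that $\|T^n x - y\| \leq \varepsilon\|y\|$. Applying $x^*$ and using $x^*(T^n x)=0$ gives
\[
|x^*(y)| = |x^*(y - T^n x)| \leq \|x^*\|\,\|T^n x - y\| \leq \varepsilon\,\|x^*\|\,\|y\|.
\]
Since this also holds trivially for $y=0$, I obtain $|x^*(y)| \leq \varepsilon\|x^*\|\,\|y\|$ for every $y \in X$, whence $\|x^*\| \leq \varepsilon\|x^*\|$. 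Because $\varepsilon \in (0,1)$, this forces $x^* = 0$, contradicting the choice of $x^*$.

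Therefore $E = X$, which is exactly the statement that $\spann(\textup{Orb}_T(x))$ is dense in $X$, so $x$ is a cyclic vector of $T$ and $T$ is cyclic. There is no real obstacle here; the only subtle point is that the argument genuinely uses $\varepsilon < 1$ (which is implicit in the hypothesis), since for $\varepsilon = 1$ the inequality $\|x^*\| \leq \varepsilon\|x^*\|$ is vacuous — consistent with the fact mentioned in the introduction that every operator is trivially $1$-hypercyclic via the zero vector.
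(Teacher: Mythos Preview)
Your proof is correct. The paper argues by contradiction as well but uses Riesz's lemma instead of Hahn--Banach: assuming $E=\overline{\spann}(\textup{Orb}_T(x))\subsetneq X$, it picks $\delta\in(\varepsilon,1)$ and a unit vector $z\in X\setminus E$ with $\dist(z,E)>\delta$, so that no $T^nx$ can enter $B(z,\delta\|z\|)$, contradicting $\varepsilon$-hypercyclicity. Your argument is the dual version of this: rather than a vector far from $E$, you produce a nonzero functional annihilating $E$ and show its norm must shrink by the factor $\varepsilon$. Both proofs are equally elementary; your Hahn--Banach route is slightly slicker in that it avoids the auxiliary parameter $\delta$ and the Riesz approximation step, while the paper's route stays entirely on the primal side and makes the geometric obstruction (an almost-orthogonal vector) explicit.
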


\begin{proof}
	It is a direct consequence of the following well-known result. 
	Let $Y$ be a closed subspace of $X$ different from $X$, then for any $\delta>0$ there exists a unitary vector $z\in X\setminus Y$ such that $\dist(z,Y)\geq 1-\delta$.
	Indeed, assume by contradiction that $T$ is a non-cyclic operator and let $Y=\overline{\textup{span}}(\textup{Orb}_T(x))$, where $x$ is an $\varepsilon$-hypercyclic vector of $T$. 
	Let $\delta\in(\varepsilon,1)$ and let $z\in X\setminus Y$ be a unitary vector such that $\textup{dist}(z,Y)> \delta$. 
	Therefore $B(z,\delta)\cap Y=\emptyset$.
	Hence, $x$ is not a $\delta$-hypercyclic vector, and thus, $x$ cannot be an $\varepsilon$-hypercyclic vector, which is a contradiction.
\end{proof}

\section{Acknowledgements}
The author is grateful to Robert Deville for introducing him into the results of Badea, Grivaux, Müller and Bayart, and for fruitful discussions. This work was supported by ANID-PFCHA/Doctorado Nacional/2018-21181905 and by CMM (UMI CNRS 2807), Basal grant: AFB170001.

\bibliographystyle{amsplain}

\end{document}